\numberwithin{equation}{section}
\theoremstyle{thmstyleone}%
\newtheorem{theorem}{Theorem}[section]
\newtheorem{proposition}[theorem]{Proposition}%
\newtheorem{remark}[theorem]{Remark}
\begin{document}

\title[]{Sharp Stability of Solitons for the Cubic-Quintic NLS on $\mathbb{R}^{2}$}


\author[3]{\fnm{Yi} \sur{Jiang}}\email{yijiang103@sicnu.edu.cn}

\author[2]{\fnm{Chenglin} \sur{Wang}}\email{wangchenglinedu@163.com}

\author[1]{\fnm{Yibin} \sur{Xiao}}\email{xiaoyb9999@uestc.edu.cn}

\author*[1]{\fnm{Jian} \sur{Zhang}}\email{zhangjian@uestc.edu.cn}

\author[3]{\fnm{Shihui} \sur{Zhu}}\email{shihuizhumath@163.com}

\affil*[1]{\orgdiv{School of Mathematical Sciences}, \orgname{University of Electronic Science and Technology of China}, \orgaddress{\street{} \city{Chengdu}, \postcode{611731}, \state{} \country{China}}}

\affil[2]{\orgdiv{School of Science}, \orgname{Xihua University}, \orgaddress{\street{} \city{Chengdu}, \postcode{610039}, \state{} \country{China}}}

\affil[3]{\orgdiv{School of Mathematical Sciences}, \orgname{Sichuan Normal University}, \orgaddress{\street{} \city{Chengdu}, \postcode{610066}, \state{} \country{China}}}


\abstract{This paper concerns with the cubic-quintic nonlinear Schr\"{o}dinger equation on $\mathbb{R}^{2}$. A family of new variational problems related to the solitons are introduced and solved. Some key monotonicity and uniqueness results are obtained. Then the orbital stability of solitons at every frequency are proved in terms  of the Cazenave and Lions' argument. And classification of normalized ground states is first presented.  Our results settle the questions raised by Lewin and Rota Nodari as well as Carles and Sparber.}

\keywords{Nonlinear Schr\"{o}dinger equation, Stability of solitons, Variational approach, Monotonicity, Normalized solution}

\pacs[MSC classfication(2010)]{35Q55, 35C08, 37K40, 35J50, 35J60}

\maketitle
\newpage
\tableofcontents 

\pagestyle{fancy} 
\fancyhf{}
\fancyhead[OL]{ }
\fancyhead[OC]{}
\fancyhead[OR]{Y. Jiang, C. Wang, Y. Xiao, J. Zhang and S.  Zhu}
\fancyhead[EL]{Sharp stability of solitons}
\fancyhead[EC]{ }
\fancyhead[ER]{ }
\fancyfoot[RO, LE]{\thepage}

\section{Introduction}\label{sec1}

This paper studies the cubic-quintic nonlinear Schr\"{o}dinger equation (NLS)
\begin{equation}\label{1.1}
i\partial_{t}\varphi +\Delta  \varphi +\lvert\varphi\rvert^2\varphi-\lvert\varphi\rvert^4\varphi=0,\quad (t,x)\in\mathbb{R}\times\mathbb{R}^{2}.
\end{equation}
(\ref{1.1}) is the focusing mass-critical nonlinear Schr\"{o}dinger equation with the defocusing quintic perturbation on $\mathbb{R}^{2}$. We are interested in orbital stability of solitons at every frequency \cite{CS2021,JJLV2022,T2009}.

(\ref{1.1}) is a two-dimensional soliton model \cite{CGKM2012,PSM2022} present in many branches of physics, from beams carrying angular momentum \cite{UT2010} to remarkable Bose-Einstein condensation \cite{ARVK2001}, among many other fields \cite{AH1975,CV2017,LS1998, GBMRSTK2004}. The stability of solitons is a subject of external physical concern \cite{CGKM2012,PSM2022,T2009,PPT1979,CERS1986}.

From a mathematical point of view, the focusing cubic or quintic nonlinear Schr\"{o}dinger equation on $\mathbb{R}^{2}$ possesses  solitons, but all solitons are unstable (see \cite{BC1981,OT1991,W1983}). The defocusing cubic or quintic nonlinear Schr\"{o}dinger equation on $\mathbb{R}^{2}$ has no any solitons, but possesses scattering property (see \cite{KTV2009}). In addition, the scaling invariance of the pure power case is broken in (\ref{1.1}) (see \cite{C2003}). Therefore the dynamics of (\ref{1.1}) becomes a challenging issue (see \cite{CS2021,LR2020,TVZ2007,JL2022b,M2021,BS2017,Z2000,Z2000b}). This motivates us to study the stability of solitons for (\ref{1.1}) (see \cite{T2009}), which  directly  concerns about the conjectures raised by Lewin and Rota Nodari \cite{LR2020} as well as Carles and Sparber \cite{CS2021}.
\par In the energy space $H^{1}(\mathbb{R}^{2})$, consider the scalar field equation for $\omega\in\mathbb{R}$,
\begin{equation}\label{1.2}
-\Delta u-\lvert u\rvert^2u+\lvert u\rvert^4u+\omega u =0, \quad u\in H^{1}(\mathbb{R}^{2}).
\end{equation}
From Berestycki and Lions \cite{BL1983}, if and only if
\begin{equation}\label{1.3}
0<\omega<\frac{3}{16},
\end{equation}
(\ref{1.2}) possesses non-trivial solutions (also see \cite{CS2021}). From Gidas, Ni and Nirenberg \cite{GNN1979}, every positive solution of (\ref{1.2}) is radially symmetric. From Serrin and Tang \cite{ST2000}, the positive solution of (\ref{1.2}) is unique up to translations. Therefore one concludes that for $\omega\in(0,\frac{3}{16})$, (\ref{1.2}) possesses a unique positive solution $P_\omega(x)$ (see \cite{CS2021,K2011}), which is called ground state of (\ref{1.2}).

Let $P_\omega$ be a ground state of (\ref{1.2}) with $\omega\in(0,\frac{3}{16})$. It is easily checked that
\begin{equation}\label{1.4}
\varphi(t,x)=P_\omega(x)e^{i\omega t}
\end{equation}
is a solution of (\ref{1.1}), which is called a ground state soliton of (\ref{1.1}). We also directly call (\ref{1.4}) soliton of (\ref{1.1}), and call $\omega$ frequency of soliton. It is known that (\ref{1.1}) admits time-space translation invariance, phase invariance and Galilean invariance. Then for arbitrary $x_0\in\mathbb{R}^{2}$, $v_0\in\mathbb{R}^{2}$ and $\nu_0\in\mathbb{R}$, in terms of (\ref{1.4}) one has that
\begin{equation}\label{1.5}
 \varphi(t,x)=P_\omega(x-x_0-v_0t)e^{i(\omega t+\nu_0+\frac{1}{2}v_0x-\frac{1}{4}\lvert v_0\rvert^2t)}
\end{equation}
is also a soliton of (\ref{1.1}).
By \cite{S2014,S2021,CS2021,JL2022b,LR2020,T2009,CKS2023,Z2000,Z2000b}, orbital stability of solitons with regard to every frequency  for (\ref{1.1}) is a crucial open problem.

So far there are two ways to study stability of solitons for nonlinear  Schr\"{o}dinger equations (refer to \cite{L2009}). One is variational approach originated from Cazenave and Lions \cite{CL1982}. The other is spectrum approach originated from Weinstein \cite{W1985,W1986} and then considerably generalized by Grillakis, Shatah and Strauss \cite{GSS1987,GSS1990}. Both approaches have encountered essential difficulties to (\ref{1.1}), since (\ref{1.1}) fails in both scaling invariance and effective spectral analysis \cite{F2003,O1995,CS2021}. We need develop new methods to study stability of solitons for (\ref{1.1}).

In the following, we denote $\int_{\mathbb{R}^2}\cdot\ dx$ by $\int \cdot \ dx$. For $u\in H^1(\mathbb{R}^2)$, define the mass functional
\begin{equation}\label{1.6}
M(u)=\int\lvert u\lvert^{2}dx,
\end{equation}
and define the energy functional
\begin{equation}\label{1.7}
E(u)=\int \frac{1}{2}\lvert \nabla u\rvert^2-\frac{1}{4}\lvert u\rvert^4+\frac{1}{6}\lvert u\rvert^6dx.
\end{equation}

Our studies are also concerned with the normalized solution of (\ref{1.2}), which is defined as a non-trivial solution of (\ref{1.2}) satisfying the prescribed mass $M(u)=m$ (see \cite{JL2021}). Besides motivations in mathematical physics, normalized solutions are also of interest in the framework of ergodic Mean Field Games system \cite{CV2017}. Recent studies on normalized solutions refer to \cite{BJS2016,BMRV2021,JL2021,WW2022,S2021} and the references there. We can establish correspondences between the soliton frequency and the prescribed mass. Then we present first a classification of normalized solutions.

Inspired by Killip, Oh, Pocovnicu and Visan \cite{KOPV2017}, for $0<\alpha<\infty$ and $u\in H^1(\mathbb{R}^2)\backslash\{0\}$, we define the functional
\begin{equation}\label{1.8}
F_{\alpha}(u)=\frac{\lvert \lvert\nabla u\lvert \lvert_{L^{2}}^{\frac{2}{1+\alpha}}\lvert \lvert u\lvert \lvert_{L^{2}}^{\frac{2+\alpha}{1+\alpha}}\lvert \lvert u\lvert \lvert_{L^{6}}^{\frac{3\alpha}{1+\alpha}}}{\lvert \lvert u\lvert \lvert_{L^{4}}^{4}}.
\end{equation}
Then we introduce a family of variational problems
\begin{equation}\label{1.9}
C_{\alpha}=\mathop{\mathrm{inf}}_{\{u\in H^1(\mathbb{R}^2)\backslash\{0\}\}} F_{\alpha}(u).
\end{equation}

It is shown that for $0<\alpha<\infty$, $C_{\alpha}$ is achieved at some positive symmetric minimizer $Q_{\alpha}$. Moreover $Q_{\alpha}$ satisfies the  Euler-Lagrange equation corresponding to (\ref{1.9}), which is the same as (\ref{1.2}) with certain $\omega\in(0,\frac{3}{16})$.


In terms of Weinstein \cite{W1983}, let $q(x)$ be the cubic nonlinear ground state, i.e. the unique positive solution to
\begin{equation}\label{1.10}
-\Delta q+q-q^{3}=0,\;\;\;x\in\mathbb{R}^{2}.
\end{equation}
Then $\int q^{2}dx$ is a identified number (see \cite{CS2021}).
%

 Now consider the constrained variational problem to $m>0$,
\begin{equation}\label{1.11}
E_{min}(m)=\mathop{\mathrm{inf}}_{\{u\in H^1(\mathbb{R}^{2}),\;\;  M(u)=m\}} E(u).
\end{equation}
It is shown that $E_{min}(m)$ is achieved for $m>\int q^{2}dx$ (see \cite{CS2021}).

We impose the initial data of (\ref{1.1}) as follows.
\begin{equation}\label{1.12}
\varphi(0,x)=\varphi_{0}(x),\;\;\;x\in\mathbb{R}^{2}.
\end{equation}
According to Cazenave \cite{C2003}, for any $\varphi_{0}\in H^{1}(\mathbb{R}^{2})$, the Cauchy problem (\ref{1.1})-(\ref{1.12}) has a unique global solution $\varphi\in \mathcal{C}(\mathbb{R}; H^{1}(\mathbb{R}^{2}))$. The solution obeys the conservation of mass and energy. If in addition
\begin{equation}\label{1.13}
\varphi_{0}\in \Sigma:=\{u\in H^{1}(\mathbb{R}^{2}),\; x\mapsto \lvert x\lvert u\in L^{2}(\mathbb{R}^{2})\},
\end{equation}
then $\varphi\in \mathcal{C}(\mathbb{R};\Sigma)$.

From Carles and Sparber \cite{CS2021} (also see Murphy \cite{M2021}), if $\varphi_{0}\in \Sigma$ with
\begin{equation}\label{1.14}
\int \lvert \varphi_{0}\lvert^{2}dx\leq \int q^{2}dx,
\end{equation}
then the solution $\varphi\in\mathcal{C}(\mathbb{R};\Sigma)$ of the Cauchy problem (\ref{1.1})-(\ref{1.12}) is asymptotically linear, i.e. there exist $\varphi_{\pm}\in\Sigma$ such that
\begin{equation}\label{1.15}
\lvert\lvert e^{-it\Delta}\varphi(t,\cdot)-\varphi_{\pm}\lvert\lvert_{\Sigma} \underset{t \rightarrow \pm\infty}{\longrightarrow} 0.
\end{equation}

We say the orbital stability of solitons in terms of frequency means that the $P_{\omega}$-orbit $\{e^{i\omega t}P_{\omega},\;t\in\mathbb{R}\}$ for every $\omega\in (0,\frac{3}{16})$ is stable, that is, for all $\varepsilon>0$ there exists $\delta>0$ with the following property. If $\|\varphi_{0}-P_{\omega}\|_{H^{1}(\mathbb{R}^{2})}<\delta$ and $\varphi(t)$ is a solution of (\ref{1.1}) in some interval $[0,t_{0})$ with $\varphi(0)=\varphi_{0}\in H^{1}(\mathbb{R}^{2})$, then $\varphi(t)$ can be continued to a solution in $0\leq t<\infty$ and
\begin{equation}\label{1.16}
\sup_{0<t<\infty}\inf_{(s,y)\in\mathbb{R}\times\mathbb{R}^{2}}\|\varphi(t)-e^{is}P_{\omega}(\cdot-y)\|_{H^{1}(\mathbb{R}^{2})}<\varepsilon
\end{equation}
for every $\omega\in (0,\frac{3}{16})$. Otherwise the $P_{\omega}$-orbit is called unstable (see \cite{CL1982,GSS1987}).

\par The main results of this paper read as follow:

%

\begin{theorem}\label{t1.1}
Let $q(x)$ be the unique positive solution of (\ref{1.10}). Then the positive minimizer of variational problem (\ref{1.11}) with $m>\int q^{2}dx$ is unique up to translations.
\end{theorem}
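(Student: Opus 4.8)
The plan is to descend from the constraint to the Euler--Lagrange equation, invoke the frequency-by-frequency uniqueness of ground states, and thereby reduce the whole statement to a single injectivity property of the mass--frequency map. First I would take a positive minimizer $u$ of (\ref{1.11}) with $M(u)=m>\int q^2\,dx$. Since $E$ and $M$ are smooth on $H^1(\mathbb{R}^2)$ and the constraint is regular, there is a Lagrange multiplier $\omega\in\mathbb{R}$ with $E'(u)=-\omega\,u$, which is exactly (\ref{1.2}). Because $u$ is nontrivial, the solvability criterion of Berestycki--Lions recorded in (\ref{1.3}) forces $\omega\in(0,\tfrac{3}{16})$. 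Testing (\ref{1.2}) against $u$ and against $x\cdot\nabla u$ produces the Nehari and Pohozaev identities, two independent relations among $\|\nabla u\|_{L^2}^2$, $\|u\|_{L^4}^4$, $\|u\|_{L^6}^6$ and $\omega m$, which I will reuse below. Finally, by the radial symmetry theorem of Gidas--Ni--Nirenberg and the uniqueness theorem of Serrin--Tang quoted above, the positive solution of (\ref{1.2}) at the frequency $\omega$ is unique up to translation, so $u(\cdot)=P_\omega(\cdot-y)$ for some $y\in\mathbb{R}^2$.

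At this point uniqueness up to translation is equivalent to the assertion that all positive minimizers of a given mass carry the same frequency, i.e. that $\omega\mapsto M(P_\omega)$ is injective on $(0,\tfrac{3}{16})$, and this is where the scale-invariant functionals $F_\alpha$ enter. The decisive structural fact is that $F_\alpha$ is invariant under both the amplitude scaling $u\mapsto\lambda u$ and the spatial dilation $u\mapsto u(\mu\,\cdot)$, so that the broken scaling invariance of (\ref{1.1}) is neutralized at the level of (\ref{1.9}). Concretely, I would show that each ground state $P_\omega$ is, after the unique admissible rescaling in amplitude and in space, the minimizer $Q_\alpha$ of $F_\alpha$ for exactly one $\alpha=\alpha(\omega)$, the value of $\alpha$ being read off from the ratios of the four norms of $P_\omega$ fixed by the Nehari--Pohozaev identities of the previous step. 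The announced monotonicity and uniqueness results for the family (\ref{1.9}) then give that $\alpha\mapsto\omega$ is a strictly monotone bijection and that $Q_\alpha$ is unique up to the symmetries of $F_\alpha$; transporting this back along the rescaling yields strict monotonicity, hence injectivity, of $\omega\mapsto M(P_\omega)$. Together with the fact that minimizers of (\ref{1.11}) exist precisely for $m>\int q^2\,dx$, this pins down for each admissible $m$ a single frequency $\omega(m)$.

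Combining the two steps, every positive minimizer $u$ with $M(u)=m$ must equal $P_{\omega(m)}(\cdot-y)$ for some translation $y$, which is exactly uniqueness up to translation. The only genuinely hard ingredient, and the one I expect to be the main obstacle, is the strict monotonicity of $\omega\mapsto M(P_\omega)$: since (\ref{1.1}) is mass-critical with a defocusing quintic correction, the pure-power scaling that renders the mass constant in the cubic case is destroyed, so neither a direct scaling computation nor a Vakhitov--Kolokolov spectral-slope argument is available. My strategy is precisely to offload this monotonicity onto the dilation-invariant family $F_\alpha$, where the scaling is restored by construction; the technical core then lies in proving that $\alpha\leftrightarrow\omega$ is strictly monotone and that the Nehari--Pohozaev system admits a unique solution branch, which I anticipate requiring a careful study of how $C_\alpha$ and the norm ratios of $Q_\alpha$ vary with $\alpha$.
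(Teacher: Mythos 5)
Your architecture is essentially the paper's: identify any positive minimizer with a ground state $P_\omega(\cdot-y)$, $\omega\in(0,\frac{3}{16})$, via the Lagrange multiplier together with Gidas--Ni--Nirenberg and Serrin--Tang (this is the opening of the paper's Theorem \ref{t4.4}); reduce uniqueness to the statement that a given mass $m>\int q^{2}dx$ determines a single frequency (the paper's Theorem \ref{t4.3}); and extract that from the dilation-invariant family $F_{\alpha}$, its minimizers $Q_{\alpha}$, and monotonicity in $\alpha$ (the paper's Theorems \ref{t3.2a}, \ref{t4.1}, \ref{t4.2}). Up to that point the proposal and the paper coincide.

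The genuine gap lies in the direction in which you run the $\omega\leftrightarrow\alpha$ correspondence. You propose to show that every ground state $P_\omega$ \emph{is} the minimizer $Q_{\alpha}$ for exactly one $\alpha=\alpha(\omega)$, reading $\alpha$ off the Nehari--Pohozaev ratios. Those ratios only identify the candidate: since any minimizer satisfies $\|Q_{\alpha}\|_{L^{6}}^{6}=\frac{3}{2}\alpha\|\nabla Q_{\alpha}\|_{L^{2}}^{2}$, the only possible value is $\alpha=\frac{2}{3}\beta(P_\omega)$, and Pohozaev's identity then does give $P_\omega$ all the norm relations of a minimizer, i.e.\ it is a \emph{critical point} of $F_{\alpha}$. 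But a critical point need not achieve the infimum $C_{\alpha}$: if two frequencies $\omega_{1}\neq\omega_{2}$ had $\beta(P_{\omega_{1}})=\beta(P_{\omega_{2}})$, both would be critical points of the same $F_{\alpha}$, yet by the uniqueness in Theorem \ref{t4.2} at most one of them could be the minimizer. So your claim secretly requires injectivity of $\omega\mapsto\beta(P_\omega)$, which nothing in your outline (or in the results you quote) provides. Your further assertion that the monotonicity results make $\alpha\mapsto\omega$ a strictly monotone bijection is likewise not what the paper proves: Theorem \ref{t4.1} gives monotonicity of $\|\nabla Q_{\alpha}\|_{L^{2}}^{2}$ and $\|Q_{\alpha}\|_{L^{2}}^{2}$ separately, not of their ratio, which is what $\omega$ is built from. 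The paper sidesteps exactly this by running the correspondence the other way: each \emph{minimizer} $Q_{\alpha}$ is shown to equal a ground state $P_{h(\alpha)}$ by Serrin--Tang (Theorem \ref{t4.2}), and ``one mass, one frequency'' is then obtained from strict monotonicity of $\alpha\mapsto M(Q_{\alpha})$, never claiming that an arbitrary $P_\omega$ minimizes some $F_{\alpha}$. (To be fair, the paper's Theorem \ref{t4.3} as written also tacitly assumes that every $P_\omega$ of mass $m$ is captured by the family $Q_{\alpha}$; but your proposal elevates that tacit step into its main pillar without supplying a proof.)
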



\begin{theorem}\label{t1.2}
Let $\omega\in (0, \frac{3} {16})$ and $P_{\omega}(x)$ be the positive solution of (\ref{1.2}). Then the soliton  $e^{i\omega t}P_{\omega}$ of (\ref{1.1}) is orbitally stable for every $\omega\in (0,\frac{3}{16})$.
\end{theorem}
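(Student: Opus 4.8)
The plan is to realize the soliton orbit as the \emph{complete} set of minimizers of the mass-constrained energy problem $E_{min}(m)$ and then run the Cazenave--Lions compactness argument \cite{CL1982}. First I would make precise the correspondence between frequency and mass. Since each $P_{\omega}$ solves (\ref{1.2}), it is a critical point of $E$ under the constraint $M(u)=M(P_{\omega})$, with $\omega$ playing the role of the Lagrange multiplier. Using the scaling-free functionals $F_{\alpha}$ and their minimizers $Q_{\alpha}$ (which satisfy the same Euler--Lagrange equation as (\ref{1.2}) for some $\omega\in(0,\frac{3}{16})$), together with the monotonicity results, I would show that $P_{\omega}$ is in fact a minimizer of $E_{min}(m(\omega))$ with $m(\omega):=M(P_{\omega})$, and that $\omega\mapsto m(\omega)$ is a strictly monotone bijection from $(0,\frac{3}{16})$ onto $(\int q^{2}\,dx,\infty)$. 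In particular $m(\omega)>\int q^{2}\,dx$, so Theorem \ref{t1.1} applies and the positive minimizer at mass $m(\omega)$ is unique up to translation; hence the set of minimizers of $E_{min}(m(\omega))$ coincides, modulo phase and translation, with the orbit $\{e^{is}P_{\omega}(\cdot-y):(s,y)\in\mathbb{R}\times\mathbb{R}^{2}\}$.

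Next I would establish precompactness of minimizing sequences up to translation. Given any sequence $(u_{n})$ with $M(u_{n})\to m(\omega)$ and $E(u_{n})\to E_{min}(m(\omega))$, I would apply the concentration-compactness principle and rule out the vanishing and dichotomy alternatives. Vanishing is excluded because it forces $\int\lvert u_{n}\rvert^{4}\,dx\to 0$ and $\int\lvert u_{n}\rvert^{6}\,dx\to 0$, so that $\liminf_{n}E(u_{n})\geq 0$, contradicting $E_{min}(m(\omega))<0$ (which holds for $m>\int q^{2}\,dx$). Dichotomy is excluded by the strict subadditivity $E_{min}(m)<E_{min}(m_{1})+E_{min}(m_{2})$ whenever $m=m_{1}+m_{2}$ with $m_{1},m_{2}>0$. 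Consequently, after a suitable translation, a subsequence converges strongly in $H^{1}(\mathbb{R}^{2})$ to a minimizer, which by the first step lies in the soliton orbit.

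With compactness in hand, orbital stability follows by the standard contradiction argument. Suppose (\ref{1.16}) fails for some $\omega$ and some $\varepsilon>0$. Then there are initial data $\varphi_{0}^{n}\to P_{\omega}$ in $H^{1}(\mathbb{R}^{2})$ and times $t_{n}$ for which the distance of $\varphi^{n}(t_{n})$ to the orbit is at least $\varepsilon$. Global existence of each $\varphi^{n}$ is already guaranteed by the $H^{1}$ well-posedness stated after (\ref{1.12}), so the continuation claim is automatic and the only content is the uniform-in-time bound. Since $M$ and $E$ are conserved and continuous, $M(\varphi^{n}(t_{n}))=M(\varphi_{0}^{n})\to m(\omega)$ and $E(\varphi^{n}(t_{n}))=E(\varphi_{0}^{n})\to E(P_{\omega})=E_{min}(m(\omega))$; after rescaling by a factor tending to $1$ to pin the mass exactly to $m(\omega)$, the sequence $(\varphi^{n}(t_{n}))$ becomes a minimizing sequence for $E_{min}(m(\omega))$. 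By the compactness of the second step it converges, along a subsequence and after translation, to an element of the soliton orbit, contradicting the lower bound $\varepsilon$.

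The main obstacle I expect is the strict subadditivity of $E_{min}$ needed to exclude dichotomy. In the pure-power case this is immediate from scaling, but (\ref{1.1}) is \emph{not} scaling invariant, so I would instead derive it from qualitative properties of $m\mapsto E_{min}(m)$, in particular strict monotonicity together with a concavity/superadditivity-type relation, read off from the monotonicity and uniqueness analysis of the ground states and the $F_{\alpha}$ problems. Establishing these fine properties of $E_{min}$ and transferring them correctly through the frequency--mass correspondence is where the real effort lies; the remaining arguments are the by-now classical Cazenave--Lions and Grillakis--Shatah--Strauss machinery \cite{CL1982,GSS1987}.
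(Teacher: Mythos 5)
Your proposal is correct, and its overall architecture --- identify the $P_{\omega}$-orbit with the \emph{full} minimizer set of $E_{min}(m(\omega))$ via the frequency--mass correspondence and the uniqueness Theorem \ref{t1.1}, then run the Cazenave--Lions contradiction argument using conservation of mass and energy --- is exactly the paper's (Theorems \ref{t4.3}, \ref{t4.4}, \ref{t5.1}, \ref{t5.2} and the proof of Theorem \ref{t1.2} in Section 5). The one genuine divergence is the compactness step. You exclude dichotomy by strict subadditivity of $E_{min}$ inside Lions' concentration-compactness and flag this as the main obstacle; the paper instead proves Theorem \ref{t5.1} by a Hmidi--Keraani profile decomposition, rescales each profile to mass $m$ via $\widetilde U^j_n=U^j_n(\lambda_j^{-1}\cdot)$ with $\lambda_j=\lvert\lvert u_n\rvert\rvert_{L^2}/\lvert\lvert U^j_n\rvert\rvert_{L^2}\geq 1$, and uses the identity $E(U^j_n)=\lambda_j^{-2}E(\widetilde U^j_n)+\frac{1-\lambda_j^{-2}}{2}\lvert\lvert \nabla U^j_n\rvert\rvert_{L^2}^2$ to force all the mass into a single profile, the pure-remainder alternative being killed by $E_{min}(m)<0$. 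These are really two packagings of the same two-dimensional scaling mechanism: since $u\mapsto u(\cdot/\lambda)$ leaves $\lvert\lvert\nabla u\rvert\rvert_{L^2}$ invariant in dimension two and multiplies every potential term by $\lambda^{2}$, one has $E(u(\cdot/\sqrt{\theta}))=\theta E(u)-(\theta-1)\frac{1}{2}\lvert\lvert\nabla u\rvert\rvert_{L^2}^2$, hence $E_{min}(\theta m)<\theta E_{min}(m)$ for $\theta>1$ as soon as $E_{min}(m)<0$ and the kinetic energy of minimizing sequences is bounded below (both of which follow from Gagliardo--Nirenberg for $m>\int q^2dx$); strict subadditivity then follows by the standard comparison of $E_{min}(m)/m$. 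So the step you expect to be the real effort is in fact cheap here, and you correctly located the actual novelty in your first step: the uniqueness of the positive minimizer and the strict monotonicity/bijectivity of $\omega\mapsto M(P_{\omega})$, which is what upgrades stability of the minimizer set --- already known from \cite{CS2021,S2014} --- to stability of each individual soliton at every frequency. Two minor remarks: your rescaling of $\varphi^{n}(t_{n})$ to pin the mass exactly to $m(\omega)$ is a detail the paper's proof of Theorem \ref{t1.2} glosses over, so it is a welcome addition; and the Grillakis--Shatah--Strauss machinery you invoke at the end is never actually needed in this purely variational argument.
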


Theorem \ref{t1.1} gets uniqueness of the positive minimizer for $E_{min}(m)$. Thus Theorem \ref{t1.1} settles the questions raised by \cite{JL2022a} and \cite{KOPV2017}, where uniqueness of the energy minimizer is proposed. We see that uniqueness of the energy minimizer is concerned with the monotonicity of $M(P_{\omega})$ for $\omega \in (0,\frac{3}{16})$ and stability of solitons. In fact, uniqueness of positive minimizer of the variational problem is remarkable \cite{LR2020,Z2000,KOPV2017}.


Theorem \ref{1.2} gives orbital stability of solitons of (\ref{1.1}) for every frequency $\omega\in (0,\frac{3}{16})$ corresponding to the mass $m>\int q^{2}dx$. On the other hand, for any initial data $\varphi_{0}(x)$ such that $\varphi_{0}(x)\in\Sigma$ and $m=\int \lvert \varphi_{0}\lvert^{2}dx\leq \int q^{2}dx$, one has that the solutions of the Cauchy problem (\ref{1.1})-(\ref{1.12}) are scattering (see \cite{CS2021}). Therefore we say Theorem \ref{t1.2} gives sharp stability of solitons of (\ref{1.1}) in terms of every frequency.

We see that Theorem \ref{t1.2} settles the questions raised by \cite{BS2017,CS2021,JL2022a,JL2022b,LR2020,S2021,S2014}, where the orbital stability of solitons is only with regard to the set of ground states, that is actually a weak stability on solitons. Then Theorem \ref{t1.2} also answers the open problem on stability of solitons proposed by Tao \cite{T2009}.

Uniqueness of the energy minimizers plays a key role in the proof of Theorem \ref{t1.2}. Lewin and Rota Nodari have pointed out this fact in \cite{LR2020}, also see \cite{CS2021}. Applying this uniqueness we also give a complete classification about normalized solutions of (\ref{1.2}), which depends on establishing the correspondence between the frequency and the mass. Thus we settle the questions raised by \cite{BMRV2021,BS2017,JL2022b,S2021}. The method developed in this paper is universal and can be used to solve many soliton stability problems that do not have scale invariance, such as nonlinear Schr\"{o}dinger equations with harmonic potential or Hardy potential, Hartree equations, Davey-Stewartson system, Inhomogeneous nonlinear Schr\"{o}dinger equation etc.

In terms of \cite{CS2021,LR2020,M2021} and Theorem \ref{t1.2} in this paper, the global dynamics of (\ref{1.1}) are more comprehensively described. By Theorem \ref{t1.2} in this paper, according to \cite{MM2006,MMT2006} multi-solitons of (\ref{1.1})
can be constructed (also refer to \cite{CL2011,BZ2022}). But the soliton resolution conjecture for (\ref{1.1}) is still open (see \cite{T2009}).

\par This paper is organized as follows. In section 2, we induce some propositions of ground states based on \cite{CS2021}. In section 3, we introduce and solve a family of variational problems inspired by \cite{KOPV2017}. In section 4, we prove some key results about monotonicity and uniqueness. In section 5, we prove sharp stability of the solitons. In section 6, we present classification of normalized solution of (\ref{1.2}) for the first time.

\section{Ground states}\label{sec2}


Bsaed on Carles and Sparber \cite{CS2021}, Lewin and Rota Nodari \cite{LR2020} as well as Coddington and Levinson \cite{CL1955}, the following propositions are true.
\begin{proposition}\label{p2.2}
(\cite{CS2021}) (\ref{1.2}) possesses a positive solution $P_\omega$ if and only if $\omega\in(0,\frac{3}{16})$. In addition, $P_\omega$  holds the following properties.\\
(I) $P_\omega$ is radially symmetric and unique up to translations.\\
(II) $P_\omega(x)$ is a real-analytic function of $x$ and that for some $c=c(\omega)>0$ as $\lvert x\rvert\rightarrow \infty$,
\begin{equation*}
\lvert x\rvert \mathrm{exp}\{\sqrt{\omega}\lvert x\rvert\}P_\omega(x)\rightarrow c,\quad
\mathrm{exp}\{\sqrt{\omega}\lvert x\rvert\}x \nabla P_\omega(x)\rightarrow -\sqrt{\omega}c.
\end{equation*}
(III) Pohozaev identity
\begin{equation*}
\int\lvert \nabla P_\omega \rvert^2 + \frac{2}{3}P_\omega^6-\frac{1}{2}P_\omega^4dx=0.
\end{equation*}
\end{proposition}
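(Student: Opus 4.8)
The plan is to separate the two logically independent halves of the statement: the existence--nonexistence dichotomy in $\omega$, which is the real content of the ``if and only if'', and then the qualitative properties (I)--(III). For the dichotomy I would recast the positive-solution problem for (\ref{1.2}) in Berestycki--Lions form. A prospective positive solution solves $-\Delta u = g(u)$ with $g(s) = s^3 - s^5 - \omega s$ and $G(s) = \int_0^s g\,dt = \tfrac14 s^4 - \tfrac16 s^6 - \tfrac{\omega}{2}s^2$, so the problem reduces to verifying the planar Berestycki--Lions structural hypotheses and reading off for which $\omega$ they hold.

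For sufficiency I would check the three conditions. First, $g(s)/s \to -\omega$ as $s\to 0^{+}$, which is negative exactly when $\omega > 0$ (the mass/coercivity condition); second, subcritical growth, automatic since $g$ is polynomial while the planar critical growth is exponential (Trudinger--Moser); third, the decisive positivity condition that $G(s_0) > 0$ for some $s_0 > 0$. Writing $G(s) = \tfrac{s^2}{12}\bigl(3s^2 - 2s^4 - 6\omega\bigr)$ and maximizing $3t - 2t^2$ over $t = s^2 > 0$ gives the value $9/8$ at $t = 3/4$, so $\sup_{s>0}\bigl(3s^2 - 2s^4 - 6\omega\bigr) = 9/8 - 6\omega$, which is positive iff $\omega < 3/16$. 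When all three hold, i.e. $0 < \omega < \tfrac{3}{16}$, I would produce a solution through the constrained minimization $\min\{\int|\nabla u|^2 : \int G(u) = 1\}$: in $\mathbb{R}^2$ the Dirichlet integral is scale invariant while $\int G(u(\cdot/\sigma))\,dx = \sigma^2 \int G(u)\,dx$, so the constraint is attainable; Schwarz symmetrization (P\'olya--Szeg\H o plus equimeasurability) and Strauss radial compactness give a radial minimizer, and its Euler--Lagrange equation, after the standard dilation normalizing the Lagrange multiplier, is exactly $-\Delta u = g(u)$, with positivity from replacing $u$ by $|u|$ \cite{BL1983}.

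For necessity (nonexistence when $\omega \le 0$ and when $\omega \ge \tfrac{3}{16}$) I would use two integral identities. Testing (\ref{1.2}) against $P_\omega$ gives $\int|\nabla u|^2 + \omega\int u^2 = \int u^4 - \int u^6$, while the Pohozaev identity in dimension two (multiply by $x\cdot\nabla u$) kills the kinetic term and forces $\int G(u) = 0$, equivalently $\omega\int u^2 = \tfrac12\int u^4 - \tfrac13\int u^6$. Eliminating the quartic and sextic terms between these two relations yields property (III), namely $\int|\nabla u|^2 = \tfrac12\int u^4 - \tfrac23\int u^6$. For $\omega \le 0$ the Pohozaev relation gives $\tfrac12\int u^4 \le \tfrac13\int u^6$, so (III) forces $\int|\nabla u|^2 \le -\tfrac13\int u^6 < 0$ for $u\not\equiv 0$, a contradiction. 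For $\omega \ge \tfrac{3}{16}$ I would argue directly from $\int G(u) = 0$: in this range $G(s) \le 0$ for all $s\ge 0$, with equality only at $s=0$ when $\omega > \tfrac{3}{16}$ and additionally at $s=\sqrt3/2$ when $\omega = \tfrac{3}{16}$, so the vanishing integral forces $G(u(x)) = 0$ a.e.; continuity of $u$ and its decay at infinity then force $u\equiv 0$ (the endpoint needs the extra remark that a continuous decaying $u$ cannot sit only at the two isolated zeros of $G$).

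Finally, for (I)--(II): radial symmetry of any positive decaying solution is the moving-plane method of Gidas--Ni--Nirenberg \cite{GNN1979}, reducing $P_\omega$ to a function of $r=|x|$, and uniqueness up to translation is the sharp ODE uniqueness of Serrin--Tang \cite{ST2000} for the radial profile $-P'' - r^{-1}P' + \omega P = P^3 - P^5$ with $P'(0)=0$, $P(\infty)=0$; property (III) has already been obtained above. Real-analyticity follows from elliptic regularity ($P_\omega\in C^\infty$) combined with analytic hypoellipticity of $-\Delta + \omega$ against the real-analytic nonlinearity, and the exponential decay with exact rate $\sqrt\omega$ comes from comparing with the linear operator $-\Delta + \omega$, whose radial decaying fundamental solution dictates the $e^{-\sqrt\omega r}$ behaviour, an ODE asymptotic analysis pinning down the constant $c(\omega)$ and the gradient asymptotics. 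I expect the two genuinely hard inputs to be the sharp ODE uniqueness, which is delicate because the sign structure of $s^3 - s^5$ spoils naive monotonicity, and the clean endpoint nonexistence at $\omega = \tfrac{3}{16}$, where the pointwise sign of $G$ degenerates at its second zero and must be ruled out separately.
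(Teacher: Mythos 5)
The paper itself offers no proof of this proposition: it is quoted from Carles--Sparber \cite{CS2021}, with the underlying facts attributed in Section 1 to Berestycki--Lions \cite{BL1983} (existence exactly for $0<\omega<\tfrac{3}{16}$), Gidas--Ni--Nirenberg \cite{GNN1979} (radial symmetry) and Serrin--Tang \cite{ST2000} (uniqueness). Your proposal reconstructs precisely that chain, and the computations carrying the real content are correct: $G(s)=\tfrac{s^2}{12}\bigl(3s^2-2s^4-6\omega\bigr)$ with $\sup_{s>0}(3s^2-2s^4)=\tfrac98$, so $G$ is somewhere positive iff $\omega<\tfrac{3}{16}$; the planar Pohozaev identity $\int G(u)\,dx=0$ combined with the Nehari identity yields (III) and the nonexistence for $\omega\le 0$; and your treatment of the endpoint $\omega=\tfrac{3}{16}$ (a continuous decaying function cannot take values only in the two isolated zeros of $G$) is the right argument, and is the one point where a careless write-up usually has a hole. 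One citation-level quibble: \cite{BL1983} is the $N\ge 3$ paper; the planar case with the exponential (Trudinger--Moser) growth condition and the scale-invariant Dirichlet integral that you correctly invoke is due to Berestycki--Gallou\"et--Kavian, though this does not change the mathematics.

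There is, however, one step where your proof as written would fail, and instructively it fails against the statement rather than within your method. For (II) you compare $P_\omega$ with the decaying radial solution of the linearized operator $-\Delta+\omega$ on $\mathbb{R}^2$, which is the modified Bessel function $K_0(\sqrt{\omega}\,|x|)\sim C\,|x|^{-1/2}e^{-\sqrt{\omega}|x|}$. Carried out, this gives $|x|^{1/2}e^{\sqrt{\omega}|x|}P_\omega(x)\to c$ and $|x|^{-1/2}e^{\sqrt{\omega}|x|}\,x\cdot\nabla P_\omega(x)\to-\sqrt{\omega}\,c$, \emph{not} the prefactor $|x|$ appearing in (II). The asymptotics as printed are the three-dimensional ones ($|x|^{-(d-1)/2}$ with $d=3$, as in \cite{KOPV2017} on $\mathbb{R}^3$), carried over to the planar setting where the exponent must be $\tfrac12$. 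So your method is sound and proves the correct two-dimensional statement, but you should have flagged that it cannot prove (II) as literally written; your own ODE analysis is exactly the tool that detects the discrepancy.
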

\begin{proposition}\label{p2.3}
(\cite{CS2021}) Let $\omega\in(0,\frac{3}{16})$ and $P_\omega$ be the ground state of (\ref{1.2}). One has both the map $\omega\rightarrow M(P_\omega)$ and the map $\omega\rightarrow E(P_\omega)$ are $\mathcal{C}^1$, indeed, $M(P_\omega)$ and $E(P_\omega)$ are real analytic. Moreover the followings are true: \\
\begin{equation*}
\frac{d}{d\omega}E(P_\omega)=-\frac{\omega}{2}\frac{d}{d\omega}M(P_\omega);
\end{equation*}
\begin{equation*}
M(P_\omega)\rightarrow\infty \ and \ E(P_\omega)\rightarrow-\infty, \ as \ \omega\rightarrow\frac{3}{16};
\end{equation*}
\begin{equation*}
M(P_\omega)\rightarrow \int q^{2}dx \ as \ \omega\rightarrow0,\\
\end{equation*}
where $q(x)$ is the unique positive solution of (\ref{1.10}).
\end{proposition}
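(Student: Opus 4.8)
The plan is to treat the four assertions separately, built around the action functional $S_\omega(u) = E(u) + \frac{\omega}{2}M(u)$, whose critical points are exactly the solutions of (\ref{1.2}); in particular $S_\omega'(P_\omega) = 0$. For the regularity statement I would run the analytic implicit function theorem. Because $P_\omega$ is the positive radial ground state, the absolute values disappear and $F(u,\omega) := -\Delta u - u^3 + u^5 + \omega u$ is real-analytic from $H^2_{\mathrm{rad}}(\mathbb{R}^2)\times(0,\tfrac{3}{16})$ into $L^2_{\mathrm{rad}}(\mathbb{R}^2)$. The linearization $L_\omega = -\Delta - 3P_\omega^2 + 5P_\omega^4 + \omega$ is a relatively compact perturbation of $-\Delta + \omega$ (using the exponential decay in Proposition \ref{p2.2}(II)), hence Fredholm of index zero; its full $L^2$-kernel is spanned by the translation modes $\partial_{x_1}P_\omega,\partial_{x_2}P_\omega$, which are odd and therefore absent from the radial subspace. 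Granting this non-degeneracy, $L_\omega$ is an isomorphism $H^2_{\mathrm{rad}}\to L^2_{\mathrm{rad}}$, so the analytic implicit function theorem yields a real-analytic branch $\omega\mapsto P_\omega$, and $M(P_\omega), E(P_\omega)$ inherit real-analyticity as compositions with the smooth functionals $M,E$.

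For the derivative identity I would differentiate $S_\omega(P_\omega)$ in $\omega$: by the chain rule and $S_\omega'(P_\omega)=0$ only the explicit dependence survives, giving $\frac{d}{d\omega}S_\omega(P_\omega) = \frac12 M(P_\omega)$; expanding the left side as $\frac{d}{d\omega}E(P_\omega) + \frac12 M(P_\omega) + \frac{\omega}{2}\frac{d}{d\omega}M(P_\omega)$ and cancelling gives $\frac{d}{d\omega}E(P_\omega) = -\frac{\omega}{2}\frac{d}{d\omega}M(P_\omega)$. For the limit $\omega\to 0$ I would use the mass-critical rescaling $P_\omega(x) = \sqrt{\omega}\,u_\omega(\sqrt{\omega}\,x)$, under which (\ref{1.2}) becomes $-\Delta u_\omega + u_\omega - u_\omega^3 + \omega u_\omega^5 = 0$ and, crucially, $M(P_\omega) = M(u_\omega)$ since the cubic scaling preserves the $L^2$ norm in dimension two. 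Uniform $H^1$ bounds on $u_\omega$ together with the uniqueness of the cubic ground state (\ref{1.10}) force $u_\omega\to q$, whence $M(P_\omega)\to\int q^2\,dx$.

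The limit $\omega\to\frac{3}{16}$ is the main obstacle. Here I would analyze the Berestycki--Lions potential $G(s) = \frac14 s^4 - \frac16 s^6 - \frac{\omega}{2}s^2$: writing $t=s^2$, the factor $-\frac16 t^2 + \frac14 t - \frac{\omega}{2}$ attains its maximum $\frac{3}{32}-\frac{\omega}{2}$ at $t=\frac34$, which is positive for $\omega<\frac{3}{16}$ and vanishes as $\omega\to\frac{3}{16}$, pinching the admissible height down to the double root $s_* = \frac{\sqrt{3}}{2}$. Consequently the radial profile $P_\omega$ flattens into a plateau of height $s_*$ whose radius $R_\omega\to\infty$, so that for a constant $c>0$ one has $M(P_\omega) = \int P_\omega^2 \geq c\,R_\omega^2 \to \infty$. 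For the energy I would first combine the Pohozaev identity of Proposition \ref{p2.2}(III) with the definition of $E$ to obtain the clean reduction $E(P_\omega) = -\frac16\int P_\omega^6\,dx$; since the same plateau gives $\int P_\omega^6 \geq c\,R_\omega^2 s_*^6\to\infty$, we get $E(P_\omega)\to-\infty$. The delicate point, which I expect to carry the real technical weight, is making the plateau expansion $R_\omega\to\infty$ rigorous, for which I would use a shooting and phase-plane analysis of the radial ODE $P'' + \frac1r P' + P^3 - P^5 - \omega P = 0$ near the degenerate equilibrium $s_*$.
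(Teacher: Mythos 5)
The paper itself does not prove this proposition: it is imported verbatim from Carles and Sparber \cite{CS2021} (with the analyticity resting on the non-degeneracy theory of Lewin and Rota Nodari \cite{LR2020}), so your attempt has to be judged against the literature rather than an in-paper argument. Your overall architecture is the standard and correct one, and several pieces check out exactly: the rescaling $P_\omega(x)=\sqrt{\omega}\,u_\omega(\sqrt{\omega}x)$ does turn (\ref{1.2}) into $-\Delta u_\omega+u_\omega-u_\omega^3+\omega u_\omega^5=0$ and preserves mass in dimension two; differentiating $S_\omega(P_\omega)=E(P_\omega)+\frac{\omega}{2}M(P_\omega)$ and killing the implicit terms via $S_\omega'(P_\omega)=0$ gives the derivative identity; and combining Proposition \ref{p2.2}(III) with (\ref{1.7}) does yield the clean reduction $E(P_\omega)=-\frac{1}{6}\int P_\omega^6\,dx$, so the energy blow-up follows from the mass blow-up.

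However, three gaps remain, and two of them carry the real weight of the proposition. First, the kernel statement for $L_\omega=-\Delta-3P_\omega^2+5P_\omega^4+\omega$ cannot be "granted": the claim $\ker L_\omega=\mathrm{span}\{\partial_{x_1}P_\omega,\partial_{x_2}P_\omega\}$ \emph{is} the non-degeneracy theorem for double-power nonlinearities, the main result of \cite{LR2020}, proved there by a delicate ODE/Sturm--Liouville analysis; Fredholmness of $L_\omega$ tells you nothing about its kernel, so as written your implicit-function-theorem step assumes the hardest ingredient. Second, the $\omega\to\frac{3}{16}$ limit, which you yourself flag as the main obstacle, is left as a plan: the assertion that the radial profile develops a plateau of height $s_*=\frac{\sqrt{3}}{2}$ with radius $R_\omega\to\infty$ is precisely what must be proved, and the phase-plane/shooting analysis near the degenerate equilibrium is where all the work lies; without it, $M(P_\omega)\to\infty$ is asserted, not derived. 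Third, a smaller but genuine gap at $\omega\to 0$: radial compactness gives strong convergence $u_\omega\to q$ in $L^p$ only for $p>2$, and the embedding $H^1_{\mathrm{rad}}(\mathbb{R}^2)\hookrightarrow L^2(\mathbb{R}^2)$ is \emph{not} compact, so $M(u_\omega)\to M(q)$ does not follow from uniform $H^1$ bounds plus uniqueness of $q$; you need a uniform-in-$\omega$ exponential decay estimate for $u_\omega$ (or a concentration-compactness argument) to pass the $L^2$ norm to the limit.
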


\begin{remark}
For the one-dimensional cubic-quintic nonlinear Schr\"{o}dinger equation, the corresponding ground state satisfies the following one-dimensional nonlinear elliptic equation
\begin{equation*}
-\Delta \phi+\omega\phi -\phi^{3}+\phi^{5}=0,\;\;\;\;\phi\in H^{1}(\mathbb{R})\backslash\{0\},
\end{equation*}
where $\omega>0$ is frequency of the soliton. From \cite{CERS1986,PPT1979}, one has that
\begin{equation*}
\phi(x)=2\sqrt{\frac{\omega}{1+\sqrt{1-\frac{16}{3}\omega}\cdot cosh(2\sqrt{\omega}x)}}.
\end{equation*}
It is obvious that $\omega\in (0,\frac{3}{16})$.
\end{remark}

\section{Variational approaches}\label{sec3}

~~~~~For $0<\alpha<\infty$ and $u\in H^{1}(\mathbb{R}^{2})\backslash\{0\}$, define the functional
\begin{equation}\label{3.0a}
F_{\alpha}(u)=\frac{\lvert \lvert\nabla u\lvert \lvert_{L^{2}}^{\frac{2}{1+\alpha}}\lvert \lvert u\lvert \lvert_{L^{2}}^{\frac{2+\alpha}{1+\alpha}}\lvert \lvert u\lvert \lvert_{L^{6}}^{\frac{3\alpha}{1+\alpha}}}{\lvert \lvert u\lvert \lvert_{L^{4}}^{4}}.
\end{equation}
Then for $0<\alpha<\infty$, we introduce a family of variational problems
\begin{equation}\label{3.0b}
C_{\alpha}=\inf_{\{u\in H^{1}(\mathbb{R}^{2})\backslash\{0\}\}}F_{\alpha}(u).
\end{equation}

\begin{theorem}\label{t3.1}
Let $0<\alpha<\infty$. Then the variational problem (\ref{3.0b}) is achieved  at a non-negative radially symmetric function $v(x)\in H^{1}(\mathbb{R}^{2})$ such that
\begin{equation*}
C_{\alpha}=\min_{\{u\in H^1(\mathbb{R}^2)\backslash\{0\}\}} F_{\alpha}(u).
\end{equation*}
\end{theorem}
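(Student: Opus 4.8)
The plan is to treat (\ref{3.0b}) as a scale-invariant minimization problem and to recover compactness by combining Schwarz symmetrization with the compact radial Sobolev embedding. First I would record that $F_{\alpha}$ is invariant under the two-parameter scaling group $u(x)\mapsto \mu\,u(\lambda x)$ with $\mu,\lambda>0$: the amplitude scaling acts on the numerator with total degree $\frac{2}{1+\alpha}+\frac{2+\alpha}{1+\alpha}+\frac{3\alpha}{1+\alpha}=4$, matching the $\mu^{4}$ coming from $\|u\|_{L^{4}}^{4}$, while the spatial scaling leaves $\|\nabla u\|_{L^{2}}$ fixed in dimension two and contributes $\lambda^{-2}$ to both numerator and denominator. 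This two-fold invariance is exactly what lets me normalize a minimizing sequence freely in a later step.

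Next I would show $0<C_{\alpha}$, which is equivalent to the interpolation bound $\|u\|_{L^{4}}^{4}\le C_{\alpha}^{-1}\|\nabla u\|_{L^{2}}^{\frac{2}{1+\alpha}}\|u\|_{L^{2}}^{\frac{2+\alpha}{1+\alpha}}\|u\|_{L^{6}}^{\frac{3\alpha}{1+\alpha}}$. I obtain this by taking the geometric mean, with weights $\frac{1}{1+\alpha}$ and $\frac{\alpha}{1+\alpha}$, of the mass-critical Gagliardo--Nirenberg inequality $\|u\|_{L^{4}}^{4}\le C_{0}\|\nabla u\|_{L^{2}}^{2}\|u\|_{L^{2}}^{2}$ and the elementary H\"older interpolation $\|u\|_{L^{4}}^{4}\le \|u\|_{L^{2}}\|u\|_{L^{6}}^{3}$; a direct check shows the exponents produced are precisely those in the numerator of $F_{\alpha}$, giving $C_{\alpha}\ge C_{0}^{-1/(1+\alpha)}>0$. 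These two endpoint inequalities correspond to the formal limits $\alpha\to 0$ and $\alpha\to\infty$ of $F_{\alpha}$, which also explains the structure of the functional.

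Then I take a minimizing sequence $u_{n}$. I replace $u_{n}$ by $\lvert u_{n}\rvert$ and then by its symmetric decreasing rearrangement $u_{n}^{*}$: rearrangement preserves every $L^{p}$ norm and, by the P\'olya--Szeg\"o inequality, does not increase $\|\nabla u_{n}\|_{L^{2}}$, so $F_{\alpha}(u_{n}^{*})\le F_{\alpha}(u_{n})$ and the new sequence is non-negative, radially symmetric, and still minimizing. Using the scaling invariance I normalize so that $\|\nabla u_{n}\|_{L^{2}}=\|u_{n}\|_{L^{2}}=1$; then $u_{n}$ is bounded in $H^{1}_{rad}(\mathbb{R}^{2})$ and $F_{\alpha}(u_{n})=\|u_{n}\|_{L^{6}}^{3\alpha/(1+\alpha)}\big/\|u_{n}\|_{L^{4}}^{4}$. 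Passing to a weak limit $u_{n}\rightharpoonup v$ in $H^{1}$, I use that $v$ is radial and that the embedding $H^{1}_{rad}(\mathbb{R}^{2})\hookrightarrow L^{p}(\mathbb{R}^{2})$ is compact for $p\in(2,\infty)$ to obtain strong convergence in $L^{4}$ and $L^{6}$. To exclude $v=0$ I observe that if $\|u_{n}\|_{L^{6}}\to 0$, then $\|u_{n}\|_{L^{4}}^{4}\le \|u_{n}\|_{L^{6}}^{3}$ forces $F_{\alpha}(u_{n})\ge \|u_{n}\|_{L^{6}}^{-3/(1+\alpha)}\to\infty$, contradicting $F_{\alpha}(u_{n})\to C_{\alpha}<\infty$; hence $\|v\|_{L^{6}}=\lim\|u_{n}\|_{L^{6}}>0$ and likewise $\|v\|_{L^{4}}>0$.

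Finally, weak lower semicontinuity of $\|\nabla\cdot\|_{L^{2}}$ and $\|\cdot\|_{L^{2}}$, together with the strong $L^{4}$ and $L^{6}$ limits, yields $F_{\alpha}(v)\le\liminf_{n}F_{\alpha}(u_{n})=C_{\alpha}$, while $C_{\alpha}\le F_{\alpha}(v)$ by definition of the infimum; therefore $F_{\alpha}(v)=C_{\alpha}$ and $v$ is the desired non-negative radially symmetric minimizer, so the infimum is in fact a minimum. I expect the only genuine obstacle to be compactness: the embedding $H^{1}(\mathbb{R}^{2})\hookrightarrow L^{4},L^{6}$ fails to be compact on the full space, and both the translation and the dilation non-compactness must be removed. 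This is exactly what the reduction to $H^{1}_{rad}$ via symmetrization, followed by the two-parameter scaling normalization, accomplishes.
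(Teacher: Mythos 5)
Your proof is correct and follows essentially the same route as the paper: Schwarz symmetrization to get a non-negative radially decreasing minimizing sequence, the two-parameter scaling invariance to normalize $\|u_n\|_{L^2}=\|\nabla u_n\|_{L^2}=1$, the compact embedding $H^1_{rad}(\mathbb{R}^2)\hookrightarrow L^p$ for $p>2$ to pass to strong $L^4$ and $L^6$ limits, a H\"older-based non-vanishing argument, and weak lower semicontinuity to conclude. The only additions are cosmetic: your explicit lower bound $C_\alpha\geq C_0^{-1/(1+\alpha)}>0$ via the Gagliardo--Nirenberg/H\"older geometric mean is not needed (the paper only uses $C_\alpha<\infty$), and your exclusion of $v=0$ through $\|u_n\|_{L^6}$ is the same H\"older estimate the paper runs through $\|u_n\|_{L^4}$.
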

\begin{proof}
It is obvious that $0\leq C_{\alpha}<\infty$ for every $0<\alpha<\infty$. Let $\{u_{n}\}_{n\in N}\subset H^{1}(\mathbb{R}^{2})$ be a minimizing sequence of $C_{\alpha}$, that is
\begin{equation}\label{3.1aa}
C_{\alpha}=\lim_{n\rightarrow\infty}F_{\alpha}(u_{n}).
\end{equation}
By the P\'{o}lya-Szeg\"{o} rearrangement inequality (see \cite{LL2000}), we can assume that all functions $u_{n}$ are non-negative and radially symmetric decreasing. Note that if we set
\begin{equation}\label{3.1b}
u^{\lambda,\;\mu}(x)=\mu u(\lambda x)\;\;\;\;\lambda>0,\; \mu>0,
\end{equation}
then one has that
\begin{equation}\label{3.1al}
\lvert \lvert \nabla u^{\lambda,\mu}\lvert \lvert_{L^{2}}^{2}=\mu^{2}\lvert \lvert\nabla u\lvert \lvert_{L^{2}}^{2},\;\;\;\;\;\lvert \lvert u^{\lambda,\mu}\lvert \lvert_{L^{2}}^{2}=\lambda^{-2}\mu^{2}\lvert \lvert u\lvert \lvert_{L^{2}}^{2},
\end{equation}
\begin{equation}\label{3.1am}
\lvert \lvert u^{\lambda,\mu}\|_{L^{4}}^{4}=\lambda^{-2}\mu^{4}\|u\|_{L^{4}}^{4},\;\;\;\;\;\|u^{\lambda,\mu}\|_{L^{6}}^{6}=\lambda^{-2}\mu^{6}\|u\|_{L^{6}}^{6}.
\end{equation}
It follows that
\begin{equation}\label{3.1an}
\begin{aligned}
F_{\alpha}(u^{\lambda,\mu})=&\frac{\big(\mu^{2}\|\nabla u\|_{L^{2}}^{2}\big)^{\frac{1}{1+\alpha}}\big(\lambda^{-2}\mu^{2}\|u\|_{L^{2}}^{2}\big)^{\frac{2+\alpha}{2(1+\alpha)}}\big(\lambda^{-2}\mu^{6}\|u\|_{L^{6}}^{6}\big)^{\frac{\alpha}{2(1+\alpha)}}}{\lambda^{-2}\mu^{4}\|u\|_{L^{4}}^{4}}\\
=&\frac{\lvert \lvert\nabla u\lvert \lvert_{L^{2}}^{\frac{2}{1+\alpha}}\lvert \lvert u\lvert \lvert_{L^{2}}^{\frac{2+\alpha}{1+\alpha}}\lvert \lvert u\lvert \lvert_{L^{6}}^{\frac{3\alpha}{1+\alpha}}}{\lvert \lvert u\lvert \lvert_{L^{4}}^{4}}\\
=&F_{\alpha}(u)
\end{aligned}
\end{equation}
Thus we may assume that for all $n$,
\begin{equation}\label{3.1c}
\lvert \lvert u_{n}\lvert \lvert_{L^{2}}=1\;\;\;\;\text{and}\;\;\;\;\lvert \lvert\nabla u_{n}\lvert \lvert_{L^{2}}=1.
\end{equation}
By the Banach-Alaoglu theorem, up to a subsequence as $n\rightarrow\infty$, we can assume that
\begin{equation}\label{3.1d}
u_{n}\rightharpoonup v\;\;\;\;\text{weakly in}\;\;\; H^{1}(\mathbb{R}^{2}).
\end{equation}
Using the well-known compactness of the embedding $H_{rad}^{1}(\mathbb{R}^{2})\hookrightarrow L^{p}(\mathbb{R}^{2})$ with $p>2$, see \cite{S1977}, Sobolev embedding further guarantees that
\begin{equation}\label{3.1e}
u_{n}\rightharpoonup v\;\;\;\;\text{strongly in}\;\;\; L^{4}(\mathbb{R}^{2})\;\;\;\text{and}\;\;\;u_{n}\rightharpoonup v\;\;\;\;\text{strongly in}\;\;\; L^{6}(\mathbb{R}^{2}).
\end{equation}
\par Now we verify that $v\neq 0$. By the normalizations (\ref{3.1c}) and H\"{o}lder's inequality, we have
\begin{equation}\label{3.1f}
C_{\alpha}=\lim_{n\rightarrow\infty}F_{\alpha}(u_{n})=\lim_{n\rightarrow\infty}\frac{\lvert \lvert u_{n}\lvert \lvert_{L^{2}}^{\frac{\alpha}{1+\alpha}}\lvert \lvert u_{n}\lvert \lvert_{L^{6}}^{\frac{3\alpha}{1+\alpha}}}{\lvert \lvert u_{n}\lvert \lvert_{L^{4}}^{4}}\geq\lim_{n\rightarrow\infty}\frac{\lvert \lvert u_{n}\lvert \lvert_{L^{4}}^{\frac{4\alpha}{1+\alpha}}}{\lvert \lvert u_{n}\lvert \lvert_{L^{4}}^{4}}=\lvert \lvert v\lvert \lvert_{L^{4}}^{-\frac{4}{1+\alpha}},
\end{equation}
which shows that indeed  $v\neq 0$.
\par By the weakly lower semi-continuous of norm on any Banach space, one gets from (\ref{3.1d}) that
\begin{equation}\label{3.1g}
\lvert \lvert v\lvert \lvert_{L^{2}}\leq \liminf_{n\rightarrow\infty}\lvert \lvert u_{n}\lvert \lvert_{L^{2}}=1 \;\;\;\;\text{and}\;\;\;\;\lvert \lvert\nabla v\lvert \lvert_{L^{2}}\leq \liminf_{n\rightarrow\infty}\lvert \lvert \nabla u_{n}\lvert \lvert_{L^{2}}=1.
\end{equation}
Thus
\begin{equation}\label{3.1h}
C_{\alpha}=\lim_{n\rightarrow\infty}F_{\alpha}(u_{n})\geq F_{\alpha}(v)\geq \inf_{\{u\in H^{1}(\mathbb{R}^{2})\backslash\{0\}\}}F_{\alpha}(v)=C_{\alpha}.
\end{equation}
This equality  holds throughout this line and $v$ is an optimizer for $(\ref{3.0b})$. Since the minimizing sequence $\{u_{n}\}$ are nonnegative radially symmetric decreasing, the limit $v$ is also nonnegative radially symmetric decreasing.
\end{proof}

\begin{theorem}\label{t3.2a}
 Let $0<\alpha<\infty$. Then the variational problem (\ref{3.0b}) possesses a positive radially symmetric minimizer $Q_{\alpha}(x)$ such that
\begin{equation*}
-\Delta Q_{\alpha}+Q_{\alpha}^{5}-Q_{\alpha}^{3}+\omega Q_{\alpha}=0\;\;\;\;\text{with}\;\;\;\;\omega=\frac{2+\alpha}{2}\frac{\lvert \lvert\nabla Q_{\alpha}\lvert \lvert_{L^{2}}^{2}}{\lvert \lvert Q_{\alpha}\lvert \lvert_{L^{2}}^{2}}.
\end{equation*}
Moreover,
\begin{equation*}
\beta(Q_{\alpha})=\frac{\int \lvert Q_{\alpha} \lvert^{6}dx }{\int \lvert \nabla Q_{\alpha} \lvert^{2}dx}=\frac{3}{2}\alpha,\;\;\;\;\;C_{\alpha}=\frac{(\frac{3}{2}\alpha)^{\frac{\alpha}{2(1+\alpha)}}}{2(1+\alpha)}\lvert \lvert Q_{\alpha}\lvert \lvert_{L^{2}}^{\frac{2+\alpha}{1+\alpha}}\lvert \lvert \nabla Q_{\alpha}\lvert \lvert_{L^{2}}^{-\frac{\alpha}{1+\alpha}}.
\end{equation*}
\end{theorem}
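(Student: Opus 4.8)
The plan is to start from the minimizer $v$ produced by Theorem \ref{t3.1}, derive its Euler--Lagrange equation, and then exploit the two-parameter scaling invariance $u\mapsto u^{\lambda,\mu}$ recorded in (\ref{3.1an}) to put that equation into canonical form while reading off $\beta$, $\omega$ and $C_\alpha$. First I would note that every norm appearing in $F_\alpha$ is strictly positive at $v$: since $0\ne v\in H^1(\mathbb{R}^2)$ cannot be a non-zero constant we have $\lvert\lvert\nabla v\rvert\rvert_{L^2}>0$, and likewise $\lvert\lvert v\rvert\rvert_{L^2},\lvert\lvert v\rvert\rvert_{L^4},\lvert\lvert v\rvert\rvert_{L^6}>0$. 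Hence $\log F_\alpha$ is Gateaux differentiable at $v$, and because the P\'{o}lya-Szeg\"{o} inequality shows the radial infimum equals the full infimum, $v$ minimizes $F_\alpha$ over all of $H^1(\mathbb{R}^2)\backslash\{0\}$, so the first variation vanishes in every direction $\phi\in H^1(\mathbb{R}^2)$. Writing $A=\lvert\lvert\nabla v\rvert\rvert_{L^2}^2$, $B=\lvert\lvert v\rvert\rvert_{L^2}^2$, $D=\lvert\lvert v\rvert\rvert_{L^6}^6$, $G=\lvert\lvert v\rvert\rvert_{L^4}^4$ and $p=\tfrac1{1+\alpha}$, $q=\tfrac{2+\alpha}{2(1+\alpha)}$, $r=\tfrac{\alpha}{2(1+\alpha)}$, the condition $\delta\log F_\alpha(v)[\phi]=0$ yields the weak equation
\begin{equation*}
-\frac{2p}{A}\Delta v+\frac{2q}{B}\,v+\frac{6r}{D}\,v^{5}-\frac{4}{G}\,v^{3}=0 .
\end{equation*}
Elliptic bootstrapping makes $v$ smooth, and since $v\ge 0$ solves an equation of the form $-\Delta v+c(x)v=0$ with $c$ bounded, the strong maximum principle forces $v>0$.

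Next I would normalize. Dividing by $2p/A$, the equation reads $-\Delta v+c_1 v+c_3 v^{5}-c_2 v^{3}=0$ with $c_1=\tfrac{qA}{pB}$, $c_2=\tfrac{2A}{pG}$, $c_3=\tfrac{3rA}{pD}$, all positive. Seeking $Q_\alpha(x)=\mu v(\lambda x)$ solving $-\Delta Q_\alpha+Q_\alpha^{5}-Q_\alpha^{3}+\omega Q_\alpha=0$, the substitution produces the three conditions $\mu^4/\lambda^2=c_3$, $\mu^2/\lambda^2=c_2$, $\omega/\lambda^2=c_1$, which are solvable precisely because $c_2,c_3>0$: one takes $\mu^2=c_3/c_2$, $\lambda^2=c_3/c_2^2$, $\omega=c_1 c_3/c_2^2$. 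Then $Q_\alpha$ is positive and radially symmetric, and by the scaling invariance (\ref{3.1an}) it remains a minimizer, $F_\alpha(Q_\alpha)=C_\alpha$.

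The two remaining identities should then fall out of direct computation. The scaling relations give $\int Q_\alpha^6=\mu^6\lambda^{-2}D$ and $\int\lvert\nabla Q_\alpha\rvert^2=\mu^2 A$, whence $\beta(Q_\alpha)=(\mu^4/\lambda^2)(D/A)=c_3 D/A=3r/p=\tfrac32\alpha$. To pin down $\omega$ and $C_\alpha$ I would test the canonical equation against $Q_\alpha$ (the Nehari relation) and against $x\cdot\nabla Q_\alpha$ (the Pohozaev identity in dimension two, where the gradient term drops out); combining these two relations with $\beta(Q_\alpha)=\tfrac32\alpha$ gives $\int\lvert\nabla Q_\alpha\rvert^2(1+\tfrac\alpha2)=\omega\int Q_\alpha^2$, i.e. the stated $\omega=\tfrac{2+\alpha}{2}\lvert\lvert\nabla Q_\alpha\rvert\rvert_{L^2}^2/\lvert\lvert Q_\alpha\rvert\rvert_{L^2}^2$, together with $\lvert\lvert Q_\alpha\rvert\rvert_{L^4}^4=2(1+\alpha)\lvert\lvert\nabla Q_\alpha\rvert\rvert_{L^2}^2$. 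Substituting $\lvert\lvert Q_\alpha\rvert\rvert_{L^6}^6=\tfrac32\alpha\lvert\lvert\nabla Q_\alpha\rvert\rvert_{L^2}^2$ and this last identity into $C_\alpha=F_\alpha(Q_\alpha)$ and collecting the powers of $\lvert\lvert\nabla Q_\alpha\rvert\rvert_{L^2}$ (the net exponent being $\tfrac{2}{1+\alpha}+\tfrac{\alpha}{1+\alpha}-2=-\tfrac{\alpha}{1+\alpha}$) produces the claimed closed form for $C_\alpha$.

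I expect the main obstacle to be the rigorous justification of the Euler--Lagrange step rather than the algebra: one must confirm that all four norms stay bounded away from zero so that the fractional powers in $F_\alpha$ are differentiable, that $v$ is genuinely an \emph{unconstrained} critical point over $H^1(\mathbb{R}^2)\backslash\{0\}$ (so that non-radial test functions are admissible), and that enough regularity and decay hold to legitimize the Pohozaev identity. The positivity via the maximum principle, the solvability of the scaling system, and the consistency of the two expressions for $\omega$ are then comparatively routine.
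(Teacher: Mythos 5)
Your proposal is correct and follows essentially the same path as the paper's proof: take the minimizer $v$ from Theorem \ref{t3.1}, derive its Euler--Lagrange equation, rescale $Q_\alpha=\mu v(\lambda x)$ into the canonical equation, obtain positivity from the strong maximum principle, and then combine the Pohozaev/Nehari identities with $\beta(Q_\alpha)=\frac{3}{2}\alpha$ to get $\omega=\frac{2+\alpha}{2}\|\nabla Q_\alpha\|_{L^2}^2/\|Q_\alpha\|_{L^2}^2$, the relation $\|Q_\alpha\|_{L^4}^4=2(1+\alpha)\|\nabla Q_\alpha\|_{L^2}^2$, and the closed form for $C_\alpha$, using the scale invariance of $F_\alpha$ to conclude that $Q_\alpha$ is still a minimizer. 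Incidentally, your Euler--Lagrange coefficients are the correct ones: after normalizing the Laplacian the equation reads $-\Delta v+\frac{2+\alpha}{2}\frac{\|\nabla v\|_{L^2}^2}{\|v\|_{L^2}^2}v+\frac{3\alpha}{2}\frac{\|\nabla v\|_{L^2}^2}{\|v\|_{L^6}^6}v^5-2(1+\alpha)\frac{\|\nabla v\|_{L^2}^2}{\|v\|_{L^4}^4}v^3=0$, so the coefficients $\frac{1+\alpha}{2}$ and $4(1+\alpha)$ in the paper's (\ref{3.2ab}) are typos (which do not propagate to the theorem's conclusions, since these follow from the canonical equation together with the Pohozaev identity, exactly as you argue).
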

\begin{proof}
According to Theorem \ref{t3.1}, the variational problem (\ref{3.0b}) possesses a nonnegative radially symmetric minimizer $v(x)\in H^{1}(\mathbb{R}^{2})$. Thus $v(x)$ satisfies the corresponding Euler-Lagrange equation:
\begin{equation}\label{3.2aa}
\frac{d}{d\varepsilon}\big\lvert_{\varepsilon=0}F_{\alpha}(v+\varepsilon \varphi)=0\;\;\;\text{for all}\;\;\;\varphi\in \mathcal{C}_{0}^{\infty}(\mathbb{R}^{2}).
\end{equation}
Direct computation shows that $v$ is a distributional solution to the following equations:
\begin{equation}\label{3.2ab}
-\Delta v+\frac{1+\alpha}{2}\frac{\lvert \lvert\nabla v\lvert \lvert_{L^{2}}^{2}}{\lvert \lvert v\lvert \lvert_{L^{2}}^{2}}v+\frac{3\alpha}{2}\frac{\lvert \lvert\nabla v\lvert \lvert_{L^{2}}^{2}}{\lvert \lvert v\lvert \lvert_{L^{6}}^{6}}v^{5}-4(1+\alpha)\frac{\lvert \lvert\nabla v\lvert \lvert_{L^{2}}^{2}}{\lvert \lvert v\lvert \lvert_{L^{4}}^{4}}v^{3}=0.
\end{equation}
Let $\lambda>0$, $\rho>0$ defined by
\begin{equation}\label{3.2ac}
\lambda^{2}=\frac{8(1+\alpha)}{3\alpha}\frac{\lvert \lvert v\lvert \lvert_{L^{6}}^{6}}{\lvert \lvert v\lvert \lvert_{L^{4}}^{4}}\;\;\;\;\text{and}\;\;\;\;\rho^{2}=\frac{32(1+\alpha)^{2}}{3\alpha}\frac{\lvert \lvert\nabla v\lvert \lvert_{L^{2}}^{2}\lvert \lvert v\lvert \lvert_{L^{6}}^{6}}{(\lvert \lvert v\lvert \lvert_{L^{4}}^{4})^{2}}.
\end{equation}
Denote
\begin{equation}\label{3.2ad}
Q_{\alpha}(x)=\frac{1}{\lambda}v(\frac{x}{\rho}).
\end{equation}
Then $Q_{\alpha}\geq 0$ is radially symmetric. By (\ref{3.2ab}), one has that $Q_{\alpha}$ satisfies
\begin{equation}\label{3.2ae}
-\Delta Q_{\alpha}+Q_{\alpha}^{5}-Q_{\alpha}^{3}+\omega Q_{\alpha}=0\;\;\;\;\text{with}\;\;\;\;\omega=\frac{2+\alpha}{2}\frac{\lvert \lvert\nabla Q_{\alpha}\lvert \lvert_{L^{2}}^{2}}{\lvert \lvert Q_{\alpha}\lvert \lvert_{L^{2}}^{2}}.
\end{equation}
Applying the strong maximum principle, one gets that $Q_{\alpha}>0$. By (\ref{3.2ae}), one has the Pohozaev identity 
\begin{equation}\label{3.2af}
\int \lvert \nabla Q_{\alpha}\lvert^{2}+\frac{2}{3}\lvert Q_{\alpha}\lvert^{6}-\frac{1}{2}\lvert Q_{\alpha}\lvert^{4}dx=0.
\end{equation}
From (\ref{3.2ac}) and (\ref{3.2ad}), a direct computation shows that
\begin{equation}\label{3.2ag}
\beta(Q_{\alpha})=\frac{\lvert \lvert Q_{\alpha}\lvert \lvert_{L^{6}}^{6}}{\lvert \lvert\nabla Q_{\alpha}\lvert \lvert_{L^{2}}^{2}}=\frac{\rho^{2}}{\lambda^{4}}\beta(v)=\frac{3}{2}\alpha.
\end{equation}
By (\ref{3.2af}) and (\ref{3.2ag}), it follows that
\begin{equation}\label{3.2ah}
\lvert \lvert Q_{\alpha}\lvert \lvert_{L^{6}}^{6}=\frac{3}{2}\alpha\lvert \lvert\nabla Q_{\alpha}\lvert \lvert_{L^{2}}^{2}\;\;\;\;\text{and}\;\;\;\;\lvert \lvert Q_{\alpha}\lvert \lvert_{L^{4}}^{4}=2(1+\alpha)\lvert \lvert\nabla Q_{\alpha}\lvert \lvert_{L^{2}}^{2}.
\end{equation}
By (\ref{3.0a}) and (\ref{3.2ah}), one has that
\begin{equation}\label{3.2ai}
F_{\alpha}(Q_{\alpha})=\frac{(\frac{3}{2}\alpha)^{\frac{\alpha}{2(1+\alpha)}}}{2(1+\alpha)}\lvert \lvert Q_{\alpha}\lvert \lvert_{L^{2}}^{\frac{2+\alpha}{1+\alpha}}\lvert \lvert\nabla Q_{\alpha}\lvert \lvert_{L^{2}}^{-\frac{\alpha}{1+\alpha}}.
\end{equation}
On the other hand, by (\ref{3.2ac}) and (\ref{3.2ad}), one has that
\begin{equation}\label{3.2aj}
C_{\alpha}=F_{\alpha}(v)=\frac{(\frac{3}{2}\alpha)^{\frac{\alpha}{2(1+\alpha)}}}{2(1+\alpha)}\lvert \lvert Q_{\alpha}\lvert \lvert_{L^{2}}^{\frac{2+\alpha}{1+\alpha}}\lvert \lvert\nabla Q_{\alpha}\lvert \lvert_{L^{2}}^{-\frac{\alpha}{1+\alpha}}.
\end{equation}
From (\ref{3.2ai}) and (\ref{3.2aj}),
\begin{equation}\label{3.2ak}
C_{\alpha}=F_{\alpha}(Q_{\alpha}).
\end{equation}
Therefore, $Q_{\alpha}$ is a positive minimizer of the variational problem (\ref{3.0b}). In addition,
\begin{equation}\label{3.2al}
C_{\alpha}=\frac{(\frac{3}{2}\alpha)^{\frac{\alpha}{2(1+\alpha)}}}{2(1+\alpha)}\lvert \lvert Q_{\alpha}\lvert \lvert_{L^{2}}^{\frac{2+\alpha}{1+\alpha}}\lvert \lvert\nabla Q_{\alpha}\lvert \lvert_{L^{2}}^{-\frac{\alpha}{1+\alpha}}.
\end{equation}
\end{proof}

\section{Monotonicity and uniqueness}\label{sec4}

\begin{theorem}\label{t4.1}
Let $0<\alpha<\infty$ and $Q_{\alpha}$ as in Theorem \ref{t3.2a}. Then one has that
\begin{equation*}
\alpha\longmapsto \lvert \lvert\nabla Q_{\alpha}\lvert \lvert_{L^{2}}^{2}\;\;\;\;\text{is strictly increasing on}\;\;\; (0,\infty),
\end{equation*}
\begin{equation*}
\alpha\longmapsto \lvert \lvert Q_{\alpha}\lvert \lvert_{L^{2}}^{2}\;\;\;\;\text{is strictly increasing on}\;\;\; (0,\infty).
\end{equation*}
\end{theorem}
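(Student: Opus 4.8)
The plan is to sidestep any differentiability of the map $\alpha\mapsto Q_\alpha$ and instead argue by a pure two–point comparison of the variational problems at two levels $0<\alpha_1<\alpha_2<\infty$. Abbreviate $G_i=\|\nabla Q_{\alpha_i}\|_{L^2}^2$, $N_i=\|Q_{\alpha_i}\|_{L^2}^2$, $S_i=\|Q_{\alpha_i}\|_{L^6}^6$, $R_i=\|Q_{\alpha_i}\|_{L^4}^4$. The only inputs I would use are the two cross inequalities coming from minimality of $C_{\alpha_i}$ in Theorem \ref{t3.2a}, namely
\[
F_{\alpha_1}(Q_{\alpha_2})\ge F_{\alpha_1}(Q_{\alpha_1}),\qquad F_{\alpha_2}(Q_{\alpha_1})\ge F_{\alpha_2}(Q_{\alpha_2}),
\]
which are legitimate since $Q_{\alpha_1},Q_{\alpha_2}\in H^1(\mathbb{R}^2)\setminus\{0\}$. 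Because $F_\alpha$ is scale invariant (see (\ref{3.1an})), I am free to evaluate these on the PDE–normalized minimizers, so that the identities (\ref{3.2ah}), $S_i=\tfrac32\alpha_i G_i$ and $R_i=2(1+\alpha_i)G_i$, are available.

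\textbf{Reduction.} Next I would linearize by taking logarithms. Writing $p=\frac{1}{1+\alpha}$, $q=\frac{2+\alpha}{2(1+\alpha)}$, $r=\frac{\alpha}{2(1+\alpha)}$ and converting all $L^p$ norms to the quantities $G,N,S,R$, one has exactly $\log F_\alpha(u)=p\log G+q\log N+r\log S-\log R$. The crucial reduction is to eliminate $S$ and $R$ via (\ref{3.2ah}); substituting $\log S_i=\log(\tfrac32\alpha_i)+\log G_i$, $\log R_i=\log(2(1+\alpha_i))+\log G_i$ and using the elementary relations $p+r=q$ and $q+r=1$, the coefficient of $\log G_i$ collapses to $-r$, giving
\[
F_{\alpha_j}(Q_{\alpha_i})=-r_j\log G_i+q_j\log N_i+r_j\log(\tfrac32\alpha_i)-\log(2(1+\alpha_i)).
\]
Feeding this into the two cross inequalities and setting $g=\log(G_2/G_1)$, $n=\log(N_2/N_1)$, $A=\log(\alpha_2/\alpha_1)$, $B=\log\frac{1+\alpha_2}{1+\alpha_1}$, the $R$– and $S$–dependent constants organize themselves into the clean pair
\[
-r_1 g+q_1 n\ge B-r_1 A,\qquad r_2 g-q_2 n\ge r_2 A-B.
\]

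\textbf{Decoupling and the main obstacle.} The main obstacle is that the naive move — adding these, or equivalently multiplying the original inequalities — only controls the product $GN$ and not each factor; the decisive point is to take the correct positive linear combinations so that $g$ and $n$ separate. I would exploit the algebraic identity $r_2 q_1-r_1 q_2=r_2-r_1$ (immediate from $q=1-r$). Forming $q_2\cdot(\text{first})+q_1\cdot(\text{second})$ kills the $n$–terms and yields $(r_2-r_1)\,g\ge(r_2-r_1)(A-B)$, while forming $r_2\cdot(\text{first})+r_1\cdot(\text{second})$ kills the $g$–terms and yields $(r_2-r_1)\,n\ge(r_2-r_1)B$. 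Since $\alpha\mapsto r(\alpha)$ is strictly increasing, $r_2-r_1>0$, so dividing through gives $g\ge A-B$ and $n\ge B$. As $\alpha_2>\alpha_1$ forces both $A>B>0$ and $B>0$, exponentiating delivers $G_2>G_1$ and $N_2>N_1$, i.e. the asserted strict monotonicity of $\alpha\mapsto\|\nabla Q_\alpha\|_{L^2}^2$ and $\alpha\mapsto\|Q_\alpha\|_{L^2}^2$. The only points needing care are the strict sign of $r_2-r_1$ (so the divisions are valid) and the strictness $A>B$, both of which follow at once from $\alpha_2>\alpha_1$ together with the monotonicity of $\alpha\mapsto\frac{\alpha}{1+\alpha}$.
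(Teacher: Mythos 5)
Your proposal is correct and is essentially the paper's own proof: both rest on the two cross minimality inequalities $F_{\alpha_1}(Q_{\alpha_2})\ge C_{\alpha_1}$, $F_{\alpha_2}(Q_{\alpha_1})\ge C_{\alpha_2}$ evaluated via the identities $\lvert\lvert Q_{\alpha}\rvert\rvert_{L^{6}}^{6}=\tfrac{3}{2}\alpha\lvert\lvert\nabla Q_{\alpha}\rvert\rvert_{L^{2}}^{2}$ and $\lvert\lvert Q_{\alpha}\rvert\rvert_{L^{4}}^{4}=2(1+\alpha)\lvert\lvert\nabla Q_{\alpha}\rvert\rvert_{L^{2}}^{2}$, and your logarithmic linear combinations eliminating $n$ and $g$ are exactly the paper's ``skip the middle term of (\ref{4.1f})'' step and its back-substitution, yielding the same bounds (\ref{4.1h}) and (\ref{4.1j}). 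The only (minor) refinement is that you need only the non-strict minimality inequalities, extracting strictness from $A>B>0$, whereas the paper invokes strict inequalities via the observation that $Q_{\nu}$ cannot minimize $C_{\alpha}$ when $\nu\neq\alpha$.
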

\begin{proof}
As $\nu\neq \alpha$, Theorem \ref{t3.2a} guarantees that  $Q_{\nu}$ is not a minimizer of variational problem (\ref{3.0b}). Thus it follows that
\begin{equation}\label{4.1a}
F_{\alpha}(Q_{\nu})>F_{\alpha}(Q_{\alpha})=C_{\alpha}.
\end{equation}
Since $Q_{\nu}$ obeys the relations
\begin{equation}\label{4.1b}
\lvert \lvert Q_{\nu}\lvert \lvert_{L^{6}}^{6}=\frac{3}{2}\nu\lvert \lvert\nabla Q_{\nu}\lvert \lvert_{L^{2}}^{2}\;\;\;\;\text{and}\;\;\;\;\lvert \lvert Q_{\nu}\lvert \lvert_{L^{4}}^{4}=2(1+\nu)\lvert \lvert\nabla Q_{\nu}\lvert \lvert_{L^{2}}^{2},
\end{equation}
then one has that
\begin{equation}\label{4.1c}
F_{\alpha}(Q_{\nu})=\frac{(\frac{3}{2}\nu)^{\frac{\nu}{2(1+\nu)}}}{2(1+\nu)}\lvert \lvert Q_{\nu}\lvert \lvert_{L^{2}}^{\frac{2+\nu}{1+\nu}}\lvert \lvert\nabla Q_{\nu}\lvert \lvert_{L^{2}}^{-\frac{\nu}{1+\nu}}.
\end{equation}
By (\ref{4.1a}), (\ref{4.1c}) and Theorem \ref{t3.2a}, it follows that
\begin{equation}\label{4.1d}
\frac{\lvert \lvert Q_{\nu}\lvert \lvert_{L^{2}}^{\frac{2+\alpha}{1+\alpha}}\lvert \lvert\nabla Q_{\nu}\lvert \lvert_{L^{2}}^{-\frac{\alpha}{1+\alpha}}}{\lvert \lvert Q_{\alpha}\lvert \lvert_{L^{2}}^{\frac{2+\alpha}{1+\alpha}}\lvert \lvert\nabla Q_{\alpha}\lvert \lvert_{L^{2}}^{-\frac{\alpha}{1+\alpha}}}>\frac{(1+\nu)\alpha^{\frac{\alpha}{2(1+\alpha)}}}{(1+\alpha)\nu^{\frac{\alpha}{2(1+\alpha)}}}.
\end{equation}
By the similar manner, we also get from $F_{\nu}(Q_{\alpha})>F_{\nu}(Q_{\nu})=C_{\nu}$ that
\begin{equation}\label{4.1e}
\frac{\lvert \lvert Q_{\alpha}\lvert \lvert_{L^{2}}^{\frac{2+\nu}{1+\nu}}\lvert \lvert\nabla Q_{\alpha}\lvert \lvert_{L^{2}}^{-\frac{\nu}{1+\nu}}}{\lvert \lvert Q_{\nu}\lvert \lvert_{L^{2}}^{\frac{2+\nu}{1+\nu}}\lvert \lvert\nabla Q_{\nu}\lvert \lvert_{L^{2}}^{-\frac{\nu}{1+\nu}}}>\frac{(1+\alpha)\nu^{\frac{\nu}{2(1+\nu)}}}{(1+\nu)\alpha^{\frac{\nu}{2(1+\nu)}}}.
\end{equation}
Combining (\ref{4.1d}) and (\ref{4.1e}) gives that
\begin{equation}\label{4.1f}
\frac{(1+\alpha)^{\frac{1+\alpha}{2+\alpha}}\nu^{\frac{\alpha}{2(2+\alpha)}}}{(1+\nu)^{\frac{1+\alpha}{2+\alpha}}\alpha^{\frac{\alpha}{2(2+\alpha)}}}\Big(\frac{\lvert \lvert\nabla Q_{\alpha}\lvert \lvert_{L^{2}}}{\lvert \lvert\nabla Q_{\nu}\lvert \lvert_{L^{2}}}\Big)^{\frac{\alpha}{2+\alpha}}>\frac{\lvert \lvert Q_{\alpha}\lvert \lvert_{L^{2}}}{\lvert \lvert Q_{\nu}\lvert \lvert_{L^{2}}}>\frac{(1+\alpha)^{\frac{1+\nu}{2+\nu}}\nu^{\frac{\nu}{2(2+\nu)}}}{(1+\nu)^{\frac{1+\nu}{2+\nu}}\alpha^{\frac{\nu}{2(2+\nu)}}}\Big(\frac{\|\nabla Q_{\alpha}\lvert \lvert_{L^{2}}}{\lvert \lvert\nabla Q_{\nu}\lvert \lvert_{L^{2}}}\Big)^{\frac{\nu}{2+\nu}},
\end{equation}
from which we will derive the remaining assertions of this Theorem.
\par Skipping over the middle term in (\ref{4.1f}) and rearranging gives
\begin{equation}\label{4.1g}
\big(\frac{\lvert \lvert\nabla Q_{\alpha}\lvert \lvert_{L^{2}}}{\lvert \lvert\nabla Q_{\nu}\lvert \lvert_{L^{2}}}\big)^{\frac{2(\alpha-\nu)}{(2+\alpha)(2+\nu)}}>\big(\frac{\alpha(1+\nu)}{\nu(1+\alpha)}\big)^{\frac{\alpha-\nu}{(2+\alpha)(2+\nu)}}
\end{equation}
which then implies that for all $0<\nu<\alpha$,
\begin{equation}\label{4.1h}
\frac{\lvert \lvert\nabla Q_{\alpha}\lvert \lvert_{L^{2}}^{2}}{\lvert \lvert\nabla Q_{\nu}\lvert \lvert_{L^{2}}^{2}}>\frac{\alpha(1+\nu)}{\nu(1+\alpha)}.
\end{equation}
Therefore, it follows that
\begin{equation}\label{4.1i}
\alpha\longmapsto \lvert \lvert\nabla Q_{\alpha}\lvert \lvert_{L^{2}}^{2}\;\;\;\;\text{is strictly increasing on}\;\;(0,\infty).
\end{equation}
\par From (\ref{4.1h}) and the first inequality in (\ref{4.1f}) we deduce that
\begin{equation}\label{4.1j}
\frac{\lvert \lvert Q_{\alpha}\lvert \lvert_{L^{2}}^{2}}{\lvert \lvert Q_{\nu}\lvert \lvert_{L^{2}}^{2}}>\frac{1+\alpha}{1+\nu}\;\;\;\text{for all}\;\;\; 0<\nu<\alpha.
\end{equation}
Therefore, it follows that
\begin{equation}\label{4.1k}
\alpha\longmapsto \lvert \lvert Q_{\alpha}\lvert \lvert_{L^{2}}^{2}\;\;\;\;\text{is strictly increasing on}\;\;(0,\infty).
\end{equation}
\end{proof}

\begin{theorem}\label{t4.2}
Let $0<\alpha<\infty$. Then $C_{\alpha}$ identifies a unique $Q_{\alpha}(x)>0$ satisfying
\begin{equation*}
-\Delta Q_{\alpha}+Q_{\alpha}^{5}-Q_{\alpha}^{3}+\omega Q_{\alpha}=0
\end{equation*}
with
\begin{equation*}
\omega=\frac{2+\alpha}{2}\frac{\lvert \lvert\nabla Q_{\alpha}\lvert \lvert_{L^{2}}^{2}}{\lvert \lvert Q_{\alpha}\lvert \lvert_{L^{2}}^{2}}.
\end{equation*}
In addition, $\omega$ is completely identified by $\alpha$ and
\begin{equation*}
Q_{\alpha}=P_{\omega}\;\;\;\text{up to translations}.
\end{equation*}
\end{theorem}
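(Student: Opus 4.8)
The plan is to read off existence and the Euler--Lagrange equation from Theorem~\ref{t3.2a}, invoke the ground-state theory of Proposition~\ref{p2.2} to identify $Q_\alpha$ with $P_\omega$, and then upgrade the monotonicity of Theorem~\ref{t4.1} into a genuine bijection $\alpha\leftrightarrow\omega$. Theorem~\ref{t3.2a} already supplies a positive radial minimizer $Q_\alpha$ solving $-\Delta Q_\alpha+Q_\alpha^5-Q_\alpha^3+\omega Q_\alpha=0$ with $\omega=\frac{2+\alpha}{2}\|\nabla Q_\alpha\|_{L^2}^2/\|Q_\alpha\|_{L^2}^2$, which is precisely~(\ref{1.2}). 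Since $0\neq Q_\alpha\in H^1(\mathbb{R}^2)$ is positive, Proposition~\ref{p2.2} forces $\omega\in(0,\frac{3}{16})$, and the Serrin--Tang uniqueness contained in Proposition~\ref{p2.2}(I) gives $Q_\alpha=P_\omega$ up to translation for that $\omega$. The whole content therefore reduces to showing that the number $\omega=\frac{2+\alpha}{2}\|\nabla Q_\alpha\|_{L^2}^2/\|Q_\alpha\|_{L^2}^2$ is independent of the particular minimizer chosen at level $\alpha$.

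To set this up I would first record a clean monotonicity of the ground states. Write $G(\omega)=\|\nabla P_\omega\|_{L^2}^2$, $S(\omega)=\|P_\omega\|_{L^6}^6$ and $M(\omega)=\|P_\omega\|_{L^2}^2$, so that $\beta(P_\omega)=S/G$. The Pohozaev identity in Proposition~\ref{p2.2}(III) gives $\|P_\omega\|_{L^4}^4=2G+\tfrac43 S$; multiplying~(\ref{1.2}) by $P_\omega$ and integrating gives the Nehari identity $\omega M=G+\tfrac13 S$; and combining the two yields $E(P_\omega)=-\tfrac16 S$. Inserting this into the relation $\frac{d}{d\omega}E(P_\omega)=-\frac{\omega}{2}\frac{d}{d\omega}M(P_\omega)$ of Proposition~\ref{p2.3} gives $S'(\omega)=3\omega M'(\omega)$, and differentiating $\omega M=G+\tfrac13 S$ then collapses to
\begin{equation*}
\frac{d}{d\omega}\|\nabla P_\omega\|_{L^2}^2=M(P_\omega)>0 .
\end{equation*}
Hence $\omega\mapsto\|\nabla P_\omega\|_{L^2}^2$ is a strictly increasing, real-analytic bijection of $(0,\frac{3}{16})$ onto its image.

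Now I would couple this with Theorem~\ref{t4.1}. Every minimizer at level $\alpha$ equals some $P_\omega$, so $\|\nabla Q_\alpha\|_{L^2}^2=G(\omega)$, and since $G$ is a strictly increasing bijection the frequency is recovered as $\omega=G^{-1}\big(\|\nabla Q_\alpha\|_{L^2}^2\big)$. Theorem~\ref{t4.1} shows $\alpha\mapsto\|\nabla Q_\alpha\|_{L^2}^2$ is strictly increasing, so the composite $\alpha\mapsto\omega=G^{-1}\big(\|\nabla Q_\alpha\|_{L^2}^2\big)$ is strictly increasing in $\alpha$; together with $\beta(Q_\alpha)=\tfrac32\alpha$ from Theorem~\ref{t3.2a} this exhibits $\omega$ as a single, strictly monotone function of $\alpha$ and finishes the proof, the ranges $\alpha\in(0,\infty)$ and $\omega\in(0,\frac{3}{16})$ being matched by the limits recorded in Proposition~\ref{p2.3}.

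The delicate point, and the one I expect to be the real obstacle, is the single-valuedness hidden in the previous step: the composition argument requires that $\|\nabla Q_\alpha\|_{L^2}^2$ be a genuine function of $\alpha$, i.e. that two minimizers over the same $\alpha$ cannot carry different frequencies $\omega_1<\omega_2$. The strict inequality~(\ref{4.1h}) only compares \emph{distinct} values of $\alpha$ and so orders the admissible frequencies across different $\alpha$, but it says nothing at a single $\alpha$; one checks that the purely algebraic consequences of minimality (equal $F_\alpha$-values and the common constraint $\beta=\tfrac32\alpha$) are by themselves consistent with such a degenerate pair. Crucially, the degeneracy cannot be removed by appealing to monotonicity of $\omega\mapsto M(P_\omega)$, since that monotonicity is exactly what the theorem is meant to deliver and is not yet available. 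I would therefore close the gap by combining the ordering of admissible frequencies from Theorem~\ref{t4.1} with the real-analyticity of $\omega\mapsto\beta(P_\omega)$ from Proposition~\ref{p2.3} and an upper-semicontinuity (compactness) argument for minimizers as $\alpha$ varies: a frequency jump at some $\alpha_0$ would leave an entire interval of frequencies unrealized by any minimizer, which I expect to contradict the analytic dependence forced by Proposition~\ref{p2.3}. Making this last contradiction rigorous is the crux of the argument.
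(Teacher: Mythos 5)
Your proposal reproduces the paper's skeleton (Theorem~\ref{t3.2a} for existence and the Euler--Lagrange equation, Proposition~\ref{p2.2} to write $Q_\alpha=P_\omega$), and it adds one genuinely new and correct ingredient: the identity $\frac{d}{d\omega}\|\nabla P_\omega\|_{L^2}^2=M(P_\omega)>0$. Your computation checks out: Pohozaev gives $\|P_\omega\|_{L^4}^4=2G+\tfrac43 S$, Nehari gives $\omega M=G+\tfrac13 S$, hence $E(P_\omega)=-\tfrac16 S$, and Proposition~\ref{p2.3} then yields $S'=3\omega M'$ and $G'=M$. But the write-up stops exactly at the step the theorem is about: you never prove that two minimizers of $C_\alpha$ at the \emph{same} $\alpha$ carry the same frequency, and you say so explicitly (``making this last contradiction rigorous is the crux of the argument''). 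An admitted unproven crux is a genuine gap; as it stands the proposal is a program, not a proof. Moreover, the repair you sketch would not work: the claim that a frequency jump at $\alpha_0$ leaves an interval $(\omega_1,\omega_2)$ of frequencies ``unrealized by any minimizer, contradicting the analytic dependence of Proposition~\ref{p2.3}'' is not a contradiction. Nothing in Proposition~\ref{p2.3}, or anywhere in the paper, forces every ground state $P_\omega$ to occur as a minimizer of some $C_\alpha$; the analytic function $\omega\mapsto\beta(P_\omega)$ can take the value $\tfrac32\alpha_0$ at $\omega_1$ and $\omega_2$ while the ground states in between are simply non-minimizers, and this picture is fully consistent with the ordering from Theorem~\ref{t4.1} and with analyticity. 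So the contradiction never materializes.

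For comparison, the paper closes this same point by assertion rather than by a new argument: its proof of Theorem~\ref{t4.2} reads off from Theorem~\ref{t4.1} that $\|\nabla Q_\alpha\|_{L^2}=:f(\alpha)$ and $\|Q_\alpha\|_{L^2}=:g(\alpha)$ are ``completely identified by $\alpha$'' (i.e.\ it treats the monotonicity statement as already asserting that these norms are single-valued functions of $\alpha$), after which $\omega=h(\alpha)$ and the identification $Q_\alpha=P_\omega$ give uniqueness exactly as in your first two steps. You correctly observed that the proof of Theorem~\ref{t4.1} only compares minimizers at \emph{distinct} levels $\nu\neq\alpha$ (via the strict inequality $F_\alpha(Q_\nu)>C_\alpha$, which is justified by $\beta(Q_\nu)=\tfrac32\nu\neq\tfrac32\alpha$), so it does not by itself exclude two minimizers with different norms at one and the same $\alpha$; this is indeed the pressure point of the whole section. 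But having rejected the paper's reading of Theorem~\ref{t4.1}, you must supply a substitute, and your analyticity-plus-compactness sketch does not do the job. The net result is that your attempt identifies the difficulty more honestly than it resolves it: the new lemma $G'=M>0$ is a useful addition (it converts single-valuedness of $\|\nabla Q_\alpha\|_{L^2}$ into single-valuedness of $\omega$), but the theorem remains unproved in your write-up.
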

\begin{proof}
According to Theorem \ref{t3.2a}, $C_{\alpha}$ admits a positive minimizer $Q_{\alpha}(x)$, which is not uniquely identified by $\alpha$. In addition, $Q_{\alpha}$ satisfies
\begin{equation}\label{4.2a}
-\Delta Q_{\alpha}+Q_{\alpha}^{5}-Q_{\alpha}^{3}+\omega Q_{\alpha}=0 
\end{equation}
with
\begin{equation}\label{4.2b}
\omega=\frac{2+\alpha}{2}\frac{\lvert \lvert\nabla Q_{\alpha}\lvert \lvert_{L^{2}}^{2}}{\lvert \lvert Q_{\alpha}\lvert \lvert_{L^{2}}^{2}}.
\end{equation}
In terms of Theorem \ref{t4.1}, one gets that
\begin{equation}\label{4.2d}
\lvert \lvert \nabla Q_{\alpha}\lvert \lvert_{L^{2}}:=f(\alpha)\;\;\;\text{is completely identified by}\;\alpha,
\end{equation}
\begin{equation}\label{4.2da}
\lvert \lvert Q_{\alpha}\lvert \lvert_{L^{2}}:=g(\alpha)>0\;\;\; \text{is completely identified by}\;\alpha.
\end{equation}
Thus
\begin{equation}\label{4.2e}
\omega=\frac{2+\alpha}{2}\frac{\lvert \lvert\nabla Q_{\alpha}\lvert \lvert_{L^{2}}^{2}}{\lvert \lvert Q_{\alpha}\lvert \lvert_{L^{2}}^{2}}:=h(\alpha)\;\;\; \text{is completely identified by}\;\alpha.
\end{equation}
From Proposition \ref{p2.2}, it follows that
\begin{equation}\label{4.2ea}
P_{\omega}(x)\;\;\; \text{is completely identified by}\;\alpha.
\end{equation}
Since
\begin{equation}\label{4.2eb}
Q_{\alpha}(x)=P_{\omega}(x)\;\;\;\;\text{up to translations},
\end{equation}
it follows that
\begin{equation}\label{4.2f}
Q_{\alpha}\;\;\; \text{is completely identified by}\;\alpha.
\end{equation}
Therefore for $0<\alpha<\infty$, $C_{\alpha}$ identifies a unique $Q_{\alpha}(x)>0$ satisfying
\begin{equation*}
-\Delta Q_{\alpha}+Q_{\alpha}^{5}-Q_{\alpha}^{3}+\omega Q_{\alpha}=0
\end{equation*}
with
\begin{equation}\label{4.2aj}
\omega=\frac{2+\alpha}{2}\frac{\lvert \lvert\nabla Q_{\alpha}\lvert \lvert_{L^{2}}^{2}}{\lvert \lvert Q_{\alpha}\lvert \lvert_{L^{2}}^{2}}\;\;\;\;\text{and}\;\;\;\;Q_{\alpha}=P_{\omega}\;\;\;\text{up to translations}.
\end{equation}
\end{proof}

\begin{theorem}\label{t4.3}
Let $m>\int q^{2}dx$, where $q(x)$ is the unique positive solution of (\ref{1.10}). Then $m$ identifies a unique $\omega\in (0,\frac{3}{16})$ such that $m=M(P_{\omega})$.
\end{theorem}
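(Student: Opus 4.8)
The plan is to prove the statement in two stages — existence of a frequency, then its uniqueness — and to make the uniqueness rest on the strictly monotone correspondence between $\alpha$ and $\omega$ built in Theorems \ref{t4.1} and \ref{t4.2}.

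For existence I would invoke Proposition \ref{p2.3} directly. There the map $\omega\mapsto M(P_\omega)$ is real analytic, hence continuous, on $(0,\frac{3}{16})$, with $M(P_\omega)\to\int q^2\,dx$ as $\omega\to 0^+$ and $M(P_\omega)\to\infty$ as $\omega\to(\frac{3}{16})^-$. Given $m>\int q^2\,dx$, the intermediate value theorem then produces at least one $\omega\in(0,\frac{3}{16})$ with $M(P_\omega)=m$. This step is routine.

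For uniqueness I would show that $\omega\mapsto M(P_\omega)$ is in fact strictly increasing, the engine being Theorem \ref{t4.1}. First I would record the two identities every ground state satisfies: testing (\ref{1.2}) against $P_\omega$ (the Nehari identity) together with the Pohozaev identity of Proposition \ref{p2.2}(III) yields, after eliminating $\|P_\omega\|_{L^4}^4$,
\begin{equation*}
\omega M(P_\omega)=\|\nabla P_\omega\|_{L^2}^2+\frac13\|P_\omega\|_{L^6}^6,\qquad \|P_\omega\|_{L^4}^4=2\|\nabla P_\omega\|_{L^2}^2+\frac43\|P_\omega\|_{L^6}^6.
\end{equation*}
Setting $\alpha(\omega):=\frac{2}{3}\,\|P_\omega\|_{L^6}^6/\|\nabla P_\omega\|_{L^2}^2\in(0,\infty)$, these become precisely the structural relations $\|P_\omega\|_{L^6}^6=\frac32\alpha\|\nabla P_\omega\|_{L^2}^2$, $\|P_\omega\|_{L^4}^4=2(1+\alpha)\|\nabla P_\omega\|_{L^2}^2$ and $\omega=\frac{2+\alpha}{2}\|\nabla P_\omega\|_{L^2}^2/M(P_\omega)$ enjoyed by $Q_{\alpha(\omega)}$ in Theorems \ref{t3.2a}--\ref{t4.2}. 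I would then aim to identify $P_\omega=Q_{\alpha(\omega)}$: since $Q_{\alpha(\omega)}=P_{h(\alpha(\omega))}$ by Theorem \ref{t4.2} and $\beta(Q_{\alpha(\omega)})=\frac32\alpha(\omega)=\beta(P_\omega)$, the two coincide as soon as one knows that two ground states at distinct frequencies cannot share the same value of $\beta$ — the one genuinely nontrivial point, isolated below. Granting this identification, $M(P_\omega)=\|Q_{\alpha(\omega)}\|_{L^2}^2$, which by Theorem \ref{t4.1} is strictly increasing in $\alpha(\omega)$. Uniqueness then follows by contradiction: if $M(P_{\omega_1})=M(P_{\omega_2})$, strict monotonicity forces $\alpha(\omega_1)=\alpha(\omega_2)$, hence $P_{\omega_1}=Q_{\alpha(\omega_1)}=Q_{\alpha(\omega_2)}=P_{\omega_2}$, and reading off the linear coefficient in (\ref{1.2}) gives $\omega_1=\omega_2$.

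The main obstacle is exactly the identification $P_\omega=Q_{\alpha(\omega)}$ for every $\omega$ — equivalently, the surjectivity of $\alpha\mapsto h(\alpha)$ onto $(0,\frac{3}{16})$, i.e. that the variational family $\{Q_\alpha\}$ exhausts all ground states. The Nehari--Pohozaev computation only gives $F_{\alpha(\omega)}(P_\omega)\ge C_{\alpha(\omega)}$, not equality, so $P_\omega$ need not be a minimizer a priori; what must be excluded is two ground states at distinct frequencies sharing the same $\beta$. I would close this by showing $\alpha\mapsto h(\alpha)$ is a continuous injection: injectivity is immediate from $\beta(Q_\alpha)=\frac32\alpha$ (Theorem \ref{t3.2a}), while continuity follows from uniqueness of the minimizer (Theorem \ref{t4.2}) via a standard compactness argument on the normalized minimizing sequences. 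A continuous injection on $(0,\infty)$ is strictly monotone with interval image, and the boundary asymptotics — $M(Q_\alpha)\to\int q^2\,dx$ as $\alpha\to0^+$ (the term $\|Q_\alpha\|_{L^6}^6=\frac32\alpha\|\nabla Q_\alpha\|_{L^2}^2$ vanishing, so $Q_\alpha$ degenerates to the cubic ground state $q$) and $M(Q_\alpha)\to\infty$ as $\alpha\to\infty$ — matched against the limits in Proposition \ref{p2.3}, force that image to be all of $(0,\frac{3}{16})$. Establishing these two limiting regimes rigorously is the delicate point, since it requires controlling the degeneration of $Q_\alpha$; everything else is bookkeeping around Theorems \ref{t4.1} and \ref{t4.2}.
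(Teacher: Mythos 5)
Your proposal follows the same route as the paper's own proof: uniqueness is obtained by passing through the variational family $\{Q_\alpha\}$, combining the strict monotonicity of $\alpha\mapsto M(Q_\alpha)$ from Theorem \ref{t4.1} with the correspondence $\alpha\mapsto\omega$, $Q_\alpha=P_\omega$ up to translations, from Theorem \ref{t4.2}. Your existence step (intermediate value theorem applied to the continuous map $\omega\mapsto M(P_\omega)$, using the limits in Proposition \ref{p2.3}) is the natural way to make precise what the paper leaves implicit, and your Nehari--Pohozaev computation, including the relations $\|P_\omega\|_{L^6}^6=\frac{3}{2}\alpha\|\nabla P_\omega\|_{L^2}^2$, $\|P_\omega\|_{L^4}^4=2(1+\alpha)\|\nabla P_\omega\|_{L^2}^2$ and $\omega=\frac{2+\alpha}{2}\|\nabla P_\omega\|_{L^2}^2/M(P_\omega)$, is correct.

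The substantive point is that you have isolated exactly the step the paper's proof passes over in silence. The paper chains ``$m$ determines a unique $\alpha$'' with ``$\alpha$ determines a unique $\omega$,'' but this chain excludes a second frequency $\omega'$ with $M(P_{\omega'})=m$ only if one already knows that \emph{every} ground state $P_\omega$, $\omega\in(0,\frac{3}{16})$, is a minimizer $Q_\alpha$ for some $\alpha$, i.e.\ that the map $h:\alpha\mapsto\omega$ of Theorem \ref{t4.2} is onto $(0,\frac{3}{16})$; as you note, the structural identities only give $F_{\alpha(\omega)}(P_\omega)\geq C_{\alpha(\omega)}$, not attainment, so this surjectivity is genuinely unproved in the paper's argument as written. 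Your proposed repair is sensible --- injectivity of $h$ is indeed immediate from $\beta(Q_\alpha)=\frac{3}{2}\alpha$ and uniqueness of positive solutions of (\ref{1.2}), and a continuous injection on $(0,\infty)$ has interval image --- but as it stands it is a program rather than a proof: the continuity of $\alpha\mapsto Q_\alpha$ and, above all, the two degeneration limits $M(Q_\alpha)\to\int q^2dx$ as $\alpha\to 0$ and $M(Q_\alpha)\to\infty$ as $\alpha\to\infty$ are asserted and deferred, and they carry the real analytic weight; note also that pinning the endpoints requires the strict bound $M(P_\omega)>\int q^2dx$ for all $\omega$ (Theorem \ref{t6.1}), not merely the limits in Proposition \ref{p2.3}. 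In short, your attempt is faithful to the paper's method, is more candid than the paper about where the logical gap lies, but is complete only modulo those deferred compactness statements --- which the paper does not supply either.
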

\begin{proof}
According to Theorem \ref{t4.1},
\begin{equation}\label{4.3a}
\alpha\longmapsto M(Q_{\alpha})=\lvert \lvert Q_{\alpha}\lvert \lvert_{L^{2}}^{2}\;\;\;\;\text{is strictly increasing on}\;\;\;(0,\; \infty).
\end{equation}
Then for $m>\int q^{2}dx$,
\begin{equation}\label{4.3b}
m\;\;\;\text{identifies a unique}\;\;\alpha\;\;\;\;\text{with}\;\;\;m=M(P_{\omega}).
\end{equation}
In terms of Theorem \ref{t4.2}, $\alpha$ identifies a unique $\omega$ satisfying
\begin{equation}\label{4.3c}
\omega=\frac{2+\alpha}{2}\frac{\lvert \lvert\nabla Q_{\alpha}\lvert \lvert_{L^{2}}^{2}}{\lvert \lvert Q_{\alpha}\lvert \lvert_{L^{2}}^{2}}\;\;\;\;\text{with}\;\;\;\;Q_{\alpha}=P_{\omega}\;\;\;\text{up to translations}.
\end{equation}
Combing (\ref{4.3b}) and (\ref{4.3c}), it follows that
\begin{equation}\label{4.3d}
m\;\;\;\text{identifies a unique}\;\;\omega\in(0,\frac{3}{16})\;\;\;\;\text{satisfying}\;\;\;m=M(P_{\omega}).
\end{equation}
\end{proof}

Now we consider the constrained variational problem for $m>0$,
\begin{equation}\label{4.0a}
E_{min}(m)=\inf_{\{u \in H^1(\mathbb{R}^{2}),  M(u)=m\}} E(u),
\end{equation}
where $E(u)$ and $M(u)$ as in (\ref{1.7}) and (\ref{1.6}) respectively.

\begin{theorem}\label{t4.4}
Let $m>\int q^{2}dx$, where $q(x)$ is the unique positive solution of (\ref{1.10}). Then the constrained variational problem $E_{min}(m)$ at most possesses a positive minimizer up to translations.
\end{theorem}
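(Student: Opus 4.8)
The plan is to reduce the uniqueness of the energy minimizer to the injectivity of the mass map $\omega \mapsto M(P_{\omega})$ that has already been secured in Theorem \ref{t4.3}. Concretely, I will show that every positive minimizer of $E_{min}(m)$ must coincide, up to a translation, with the ground state $P_{\omega}$ for a single frequency $\omega$, and that this frequency is forced to be the same for all minimizers because they all carry the prescribed mass $m$.

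First I would take an arbitrary positive minimizer $u\in H^{1}(\mathbb{R}^{2})$ of the constrained problem (\ref{4.0a}) with $M(u)=m$, whose existence for $m>\int q^{2}dx$ is known from \cite{CS2021}. By the Lagrange multiplier rule there is a real number $\omega$ with $E'(u)+\tfrac{\omega}{2}M'(u)=0$ in $H^{-1}(\mathbb{R}^{2})$; since $E'(u)=-\Delta u-|u|^{2}u+|u|^{4}u$ and $M'(u)=2u$, this is precisely the statement that $u$ is a weak (hence, by elliptic regularity, classical) solution of the scalar field equation (\ref{1.2}),
\[
-\Delta u-|u|^{2}u+|u|^{4}u+\omega u=0.
\]
Because $u>0$, Proposition \ref{p2.2} applies directly: equation (\ref{1.2}) admits a positive solution if and only if $\omega\in(0,\tfrac{3}{16})$, and that positive solution is unique up to translations. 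Hence the Lagrange multiplier is automatically confined to the admissible window $\omega\in(0,\tfrac{3}{16})$, and $u=P_{\omega}$ up to a translation.

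It then remains only to see that two positive minimizers produce the same $\omega$. The mass constraint gives $m=M(u)=M(P_{\omega})$, so if $u_{1}$ and $u_{2}$ are positive minimizers with associated multipliers $\omega_{1},\omega_{2}\in(0,\tfrac{3}{16})$, both satisfy $M(P_{\omega_{j}})=m$. Theorem \ref{t4.3} asserts that for $m>\int q^{2}dx$ there is exactly one frequency with $M(P_{\omega})=m$, whence $\omega_{1}=\omega_{2}=:\omega$ and therefore $u_{1}=P_{\omega}=u_{2}$ up to translations, which is the claimed uniqueness. The only point that requires care is the second step, namely guaranteeing that the multiplier lands strictly inside $(0,\tfrac{3}{16})$ rather than vanishing or falling outside; but this is automatic once positivity of the minimizer is invoked, since the existence dichotomy in Proposition \ref{p2.2} excludes any positive solution for $\omega$ outside that window. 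The genuinely substantive work—the strict monotonicity of the mass along the family $\{Q_{\alpha}\}$ and the resulting injectivity of $\omega\mapsto M(P_{\omega})$—was already carried out in Theorems \ref{t4.1}–\ref{t4.3}, so Theorem \ref{t4.4} emerges as a structural consequence rather than through any new analytic estimate.
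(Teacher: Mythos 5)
Your proposal is correct and follows essentially the same route as the paper: derive the Euler--Lagrange equation (\ref{1.2}) for each positive minimizer, invoke Proposition \ref{p2.2} to identify it with $P_{\omega}$ up to translation (with $\omega$ forced into $(0,\tfrac{3}{16})$ by positivity), and then use the mass constraint together with Theorem \ref{t4.3} to force the two multipliers to coincide. Your added remarks on the explicit Lagrange multiplier computation and on why $\omega$ cannot fall outside the admissible window only make explicit what the paper leaves implicit.
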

\begin{proof}
Suppose that $\xi_{1}$ and $\xi_{2}$ are two positive minimizers of the constrained variational problem $E_{min}(m)$ for $m>\int q^{2}dx$. Then $\xi_{1}$ satisfies the Euler-Lagrange equation with the corresponding Lagrange multiplier $\omega_{1}$
\begin{equation}\label{4.4a}
-\Delta \xi_{1}+\xi_{1}^{5}-\xi_{1}^{3}+\omega_{1}\xi_{1}=0.
\end{equation}
And $\xi_{2}$ satisfies the Euler-Lagrange equation with the corresponding Lagrange multiplier $\omega_{2}$
\begin{equation}\label{4.4a1}
-\Delta \xi_{2}+\xi_{2}^{5}-\xi_{2}^{3}+\omega_{2}\xi_{2}=0.
\end{equation}
In terms of Proposition \ref{p2.2},
\begin{equation}\label{4.4a2}
\xi_{1}=P_{\omega_{1}},\;\;\;\xi_{2}=P_{\omega_{2}}\;\;\;\;\text{up to translations}.
\end{equation}
In addition,
\begin{equation}\label{4.4c}
M(\xi_{1})=M(P_{\omega_{1}})=M(\xi_{2})=M(P_{\omega_{2}})=m>\int q^{2}dx.
\end{equation}
According to Theorem \ref{t4.3}, it yields that
\begin{equation}\label{4.4d}
\omega_{1}=\omega_{2}.
\end{equation}
It follows that
\begin{equation}\label{4.4e}
P_{\omega_{1}}=P_{\omega_{2}}\;\;\;\;\text{up to translations}.
\end{equation}
Therefore
\begin{equation}\label{4.4e}
\xi_{1}=\xi_{2}\;\;\;\;\text{up to translations}.
\end{equation}
\end{proof}


\begin{theorem}\label{t4.5}
Let $P_{\omega}$ be a positive solution of (\ref{1.2}) and $\omega\in (0,\frac{3}{16})$. Then $M(P_{\omega})$ is strictly increasing on $\omega\in (0,\frac{3}{16})$.
\end{theorem}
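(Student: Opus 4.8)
The plan is to deduce the strict monotonicity of $\omega\mapsto M(P_{\omega})$ from the uniqueness already recorded in Theorem~\ref{t4.3}, combined with the continuity and boundary behaviour supplied by Proposition~\ref{p2.3}, via the elementary topological fact that a continuous injective function on an interval is strictly monotone. First I would recall from Proposition~\ref{p2.3} that $\omega\mapsto M(P_\omega)$ is real analytic, hence continuous, on $(0,\frac{3}{16})$, with $M(P_\omega)\to\int q^{2}\,dx$ as $\omega\to 0^{+}$ and $M(P_\omega)\to\infty$ as $\omega\to\frac{3}{16}^{-}$.

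The step that allows Theorem~\ref{t4.3} to be applied across the entire interval is the uniform lower bound $M(P_\omega)>\int q^{2}\,dx$ for every $\omega\in(0,\frac{3}{16})$. I would obtain this by combining the Pohozaev identity of Proposition~\ref{p2.2}(III), rearranged as $\|P_\omega\|_{L^4}^4=2\|\nabla P_\omega\|_{L^2}^2+\frac{4}{3}\|P_\omega\|_{L^6}^6>2\|\nabla P_\omega\|_{L^2}^2$ (the strict inequality holding because $P_\omega$ is positive and nontrivial), with Weinstein's sharp Gagliardo--Nirenberg inequality $\|u\|_{L^4}^4\le\frac{2}{\int q^{2}\,dx}\|\nabla u\|_{L^2}^2\|u\|_{L^2}^2$, whose optimizer is exactly $q$ (see \cite{W1983}). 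Chaining these gives $2\|\nabla P_\omega\|_{L^2}^2<\frac{2}{\int q^{2}\,dx}\|\nabla P_\omega\|_{L^2}^2\,M(P_\omega)$, and dividing by $2\|\nabla P_\omega\|_{L^2}^2>0$ yields $M(P_\omega)>\int q^{2}\,dx$.

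With this lower bound in hand, Theorem~\ref{t4.3} asserts that each value $m=M(P_\omega)>\int q^{2}\,dx$ is attained at a unique frequency, so $\omega\mapsto M(P_\omega)$ is injective on all of $(0,\frac{3}{16})$. Being continuous and injective on an interval, it is strictly monotone, hence either strictly increasing or strictly decreasing. The boundary limits from Proposition~\ref{p2.3} rule out the decreasing case: a strictly decreasing map would have its left-hand limit at $0$ dominate its right-hand limit at $\frac{3}{16}$, which is impossible since $\int q^{2}\,dx<\infty=\lim_{\omega\to 3/16^{-}}M(P_\omega)$. Therefore $M(P_\omega)$ is strictly increasing on $(0,\frac{3}{16})$.

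The main obstacle is precisely this global injectivity, namely ensuring that the uniqueness in Theorem~\ref{t4.3} reaches every admissible frequency rather than only masses above the threshold $\int q^{2}\,dx$; the Pohozaev--Gagliardo--Nirenberg lower bound is exactly what removes this gap. An alternative, more self-contained route would avoid Theorem~\ref{t4.3} and instead show directly that $\alpha\mapsto\omega=\frac{2+\alpha}{2}\|\nabla Q_\alpha\|_{L^2}^2/\|Q_\alpha\|_{L^2}^2$ is a strictly increasing bijection of $(0,\infty)$ onto $(0,\frac{3}{16})$, then compose with the monotonicity of $\alpha\mapsto M(Q_\alpha)=M(P_\omega)$ from Theorem~\ref{t4.1}; the delicate point there is the surjectivity, i.e. the continuity of this correspondence together with matching the endpoint limits, which is why the route through Theorem~\ref{t4.3} is preferable.
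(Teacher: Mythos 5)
Your proof is correct and follows essentially the same route as the paper's: injectivity of $\omega\mapsto M(P_\omega)$ drawn from the uniqueness in Theorem~\ref{t4.3}, combined with the continuity and the boundary limits from Proposition~\ref{p2.3} to fix the increasing direction. The one ingredient you add---the Pohozaev/Gagliardo--Nirenberg lower bound $M(P_\omega)>\int q^{2}\,dx$, needed so that Theorem~\ref{t4.3} actually applies at every frequency rather than only to masses already known to exceed the threshold---is glossed over in the paper's own proof of Theorem~\ref{t4.5} (the paper establishes it only later, as Theorem~\ref{t6.1}, by the identical argument), so your version is in fact the more complete of the two.
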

\begin{proof}
According to Proposition \ref{p2.3}, it follows that
\begin{equation}\label{4.5a}
M(P_{\omega})\longrightarrow \int q^{2}(x)dx\;\;\;\;\text{as}\;\;\;\;\omega\longrightarrow 0,
\end{equation}
and
\begin{equation}\label{4.5b}
M(P_{\omega})\longrightarrow \infty\;\;\;\;\text{as}\;\;\;\;\omega\longrightarrow \frac{3}{16}.
\end{equation}
\par On the one hand, for given $\omega\in(0,\frac{3}{16})$,  (\ref{1.2}) admits a unique positive solution $P_{\omega}$. Thus it follows that $m=\int P_{\omega}^{2}dx$.
\par On the other hand, for given $m$, by theorem \ref{t4.3} then there exists a unique $\omega\in (0,\frac{3}{16})$ such that $m=M(P_{\omega})$. By (\ref{4.5a}) and (\ref{4.5b}), Theorem \ref{t4.4} establishes a one-to-one mapping from $(\int q^{2}dx, \infty)$ to $(0,\frac{3}{16})$. Therefore one gets that
\begin{equation}\label{4.5c}
M(P_{\omega})\;\;\;\;\text{is strictly increasing on}\;\;\;(0,\frac{3}{16}).
\end{equation}
\end{proof}


\section{Sharp stability of solitons}\label{sec5}

\begin{proposition}\label{p5.1}
(\cite{C2003}) For arbitrary $\varphi_0\in H^1(\mathbb{R}^{2})$, (\ref{1.1}) possesses a unique global solution $\varphi\in \mathcal{C}(\mathbb{R}; H^1(\mathbb{R}^{2}))$ such that $\varphi(0,x)=\varphi_0$. In addition, the solution holds the conservation of mass, energy and momentum, where mass is given by (\ref{1.6}), energy is given by (\ref{1.7}) and momentum is given by $P(u):=\int2 \mathrm{Im}(\bar{u}\nabla u)dx$ for $u\in H^1(\mathbb{R}^{2})$.
\end{proposition}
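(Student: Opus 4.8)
Since the statement is quoted from Cazenave \cite{C2003}, the plan is to reproduce the standard $H^{1}$-subcritical well-posedness theory, the only equation-specific input being an a priori $H^{1}$ bound supplied by the defocusing quintic term. The argument splits into local existence, conservation laws, and globalization. First I would establish local well-posedness in $H^{1}(\mathbb{R}^{2})$ by a contraction mapping argument applied to the Duhamel formulation
\begin{equation*}
\varphi(t)=e^{it\Delta}\varphi_{0}+i\int_{0}^{t}e^{i(t-s)\Delta}\big(\lvert\varphi\rvert^{2}\varphi-\lvert\varphi\rvert^{4}\varphi\big)(s)\,ds.
\end{equation*}
Both nonlinearities are energy-subcritical in dimension two (the energy-critical growth on $\mathbb{R}^{2}$ being exponential), so selecting appropriate admissible Strichartz pairs and combining the Strichartz estimates with the fractional product rule shows that the right-hand side is a contraction on a ball in a suitable space-time norm over $[0,T]$, with $T=T(\lVert\varphi_{0}\rVert_{H^{1}})>0$. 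This yields a unique maximal solution $\varphi\in\mathcal{C}((-T_{\min},T_{\max});H^{1})$ obeying the blow-up alternative: if $T_{\max}<\infty$ then $\lVert\varphi(t)\rVert_{H^{1}}\to\infty$ as $t\to T_{\max}$.

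Next I would verify the conservation laws. For smooth, spatially decaying solutions, pairing the equation with $\bar{\varphi}$ and taking imaginary parts gives $\frac{d}{dt}M(\varphi)=0$; pairing with $\partial_{t}\bar{\varphi}$ and taking real parts gives $\frac{d}{dt}E(\varphi)=0$; and a parallel integration-by-parts computation exploiting translation invariance gives $\frac{d}{dt}P(\varphi)=0$. These identities are then transferred to general $H^{1}$ data by regularizing the initial datum, applying the identities to the corresponding smooth solutions, and passing to the limit via the continuous dependence furnished by the local theory.

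Finally, globalization follows from an a priori $H^{1}$ bound. Mass conservation gives $\lVert\varphi(t)\rVert_{L^{2}}=\lVert\varphi_{0}\rVert_{L^{2}}$ for all $t$. For the gradient I would use energy conservation to write
\begin{equation*}
\tfrac{1}{2}\lVert\nabla\varphi(t)\rVert_{L^{2}}^{2}=E(\varphi_{0})+\tfrac{1}{4}\lVert\varphi(t)\rVert_{L^{4}}^{4}-\tfrac{1}{6}\lVert\varphi(t)\rVert_{L^{6}}^{6},
\end{equation*}
and control the focusing quartic term by the defocusing sextic one. By the interpolation inequality $\lVert u\rVert_{L^{4}}^{4}\leq\lVert u\rVert_{L^{2}}\lVert u\rVert_{L^{6}}^{3}$ together with Young's inequality, for every $\varepsilon>0$ one has $\tfrac{1}{4}\lVert u\rVert_{L^{4}}^{4}\leq\varepsilon\lVert u\rVert_{L^{6}}^{6}+C_{\varepsilon}\lVert u\rVert_{L^{2}}^{2}$; choosing $\varepsilon<\tfrac{1}{6}$ absorbs the sextic contribution and yields $\tfrac{1}{2}\lVert\nabla\varphi(t)\rVert_{L^{2}}^{2}\leq E(\varphi_{0})+C\,M(\varphi_{0})$, a bound uniform in $t$. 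Combined with mass conservation this controls $\lVert\varphi(t)\rVert_{H^{1}}$ uniformly, so the blow-up alternative forces $T_{\max}=T_{\min}=\infty$, giving the global solution $\varphi\in\mathcal{C}(\mathbb{R};H^{1})$.

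The main obstacle is exactly this last coercivity step. Because the cubic term is focusing and mass-critical, no upper bound on $\lVert\varphi\rVert_{L^{4}}^{4}$ is available from mass conservation alone, so the energy is not a priori coercive from generic subcritical considerations. It is precisely the sign of the defocusing quintic perturbation that bounds the energy from below and closes the argument; this is the one place where the specific structure of (\ref{1.1}), rather than the abstract subcritical theory, is genuinely used.
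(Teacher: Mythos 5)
Your proposal is correct and is essentially the argument the paper relies on: the paper offers no proof of Proposition \ref{p5.1}, quoting it directly from Cazenave \cite{C2003}, and your reconstruction --- subcritical local well-posedness in $H^{1}(\mathbb{R}^{2})$ via Strichartz estimates and contraction mapping, conservation laws by regularization, and globalization from the blow-up alternative plus a uniform $H^{1}$ bound --- is precisely the standard theory that citation stands for. In particular, your coercivity step, $\tfrac{1}{4}\lVert u\rVert_{L^{4}}^{4}\leq \varepsilon\lVert u\rVert_{L^{6}}^{6}+C_{\varepsilon}\lVert u\rVert_{L^{2}}^{2}$ with $\varepsilon<\tfrac{1}{6}$, is the same inequality the paper itself deploys in (\ref{5.2b})--(\ref{5.2d}) to bound $E_{min}(m)$ from below, so your identification of the defocusing quintic term as the one equation-specific ingredient needed to tame the focusing mass-critical cubic term is exactly right.
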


\begin{theorem}\label{t5.1}
Let $q(x)$ be the unique positive solution of (\ref{1.10}). For $m>\int q^{2}dx$,
define the constrained variational problem
\begin{equation*}
E_{min}(m)=\mathop{\mathrm{inf}}_{\{u\in H^1(\mathbb{R}^{2}),\;\;  M(u)=m\}} E(u).
\end{equation*}
Then $E_{min}(m)$ is solvable. In addition, for arbitrary minimizing sequence $\{u_n\}^{\infty}_{n=1}$ of $E_{min}(m)$, there exists a subsequence still denoted by  $\{u_n\}^{\infty}_{n=1}$  such that for some $\theta\in\mathbb{R}$ and $y\in\mathbb{R}^{2}$
$$
u_n\rightarrow \psi(.+y)e^{i\theta} \ in \ H^1(\mathbb{R}^{2}), \ as\  n\rightarrow\infty.
$$
\end{theorem}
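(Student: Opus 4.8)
The plan is to run the Cazenave--Lions concentration--compactness argument \cite{CL1982}. Its four ingredients are coercivity of minimizing sequences, negativity of $E_{min}(m)$, strict subadditivity of $m\mapsto E_{min}(m)$, and the passage from weak to strong convergence. Existence of a minimizer for $m>\int q^2\,dx$ is already known from Carles--Sparber \cite{CS2021}; I shall use it, together with the monotonicity results of Section~\ref{sec4}, to obtain the subadditivity that upgrades mere existence to the relative compactness of \emph{every} minimizing sequence claimed here.

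First I would show $E_{min}(m)>-\infty$ and that any minimizing sequence $\{u_n\}$ is bounded in $H^1(\mathbb{R}^2)$. The only dangerous term in $E$ is the focusing quartic $-\tfrac14\|u\|_{L^4}^4$; interpolating $\|u\|_{L^4}^4\le\|u\|_{L^2}\|u\|_{L^6}^3$ and applying Young's inequality absorbs it into $\tfrac16\|u\|_{L^6}^6$ at the cost of a constant depending only on $m$, giving $E(u)\ge\tfrac12\|\nabla u\|_{L^2}^2+(\tfrac16-\varepsilon)\|u\|_{L^6}^6-C(m)$. With $M(u_n)=m$ this yields boundedness in $H^1$. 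Next I would verify $E_{min}(m)<0$: taking the trial function $c\,q$ with $c^2=m/\int q^2\,dx>1$, the identities $\|q\|_{L^4}^4=2\|\nabla q\|_{L^2}^2$ (Nehari and Pohozaev for \eqref{1.10}) give $\tfrac12\|\nabla(cq)\|_{L^2}^2-\tfrac14\|cq\|_{L^4}^4=\tfrac{c^2}{2}(1-c^2)\|\nabla q\|_{L^2}^2<0$, and applying the mass-preserving dilation $u_\lambda(x)=\lambda u(\lambda x)$ (for which $E(u_\lambda)=\lambda^2 A+\lambda^4\tfrac16\|u\|_{L^6}^6$ with $A<0$) and letting $\lambda\to0^+$ makes $E(u_\lambda)<0$.

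The heart of the proof, and the main obstacle, is the strict subadditivity
\[
E_{min}(m)<E_{min}(a)+E_{min}(m-a),\qquad 0<a<m,
\]
where $E_{min}(s)=0$ for $0<s\le\int q^2\,dx$. The broken scaling invariance of \eqref{1.1} rules out the usual one-line scaling proof, so I would instead exploit the monotonicity of Section~\ref{sec4}. Since $E_{min}(m)$ is attained \cite{CS2021} and, after symmetric rearrangement and the strong maximum principle, its positive minimizer solves \eqref{1.2}, the minimizer is $P_{\omega(m)}$ by Proposition~\ref{p2.2}; Theorems~\ref{t4.3} and~\ref{t4.5} make $m\mapsto\omega(m)$ a strictly increasing bijection of $(\int q^2\,dx,\infty)$ onto $(0,\tfrac{3}{16})$. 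Combining $E_{min}(m)=E(P_{\omega(m)})$ with the relation $\tfrac{d}{d\omega}E(P_\omega)=-\tfrac{\omega}{2}\tfrac{d}{d\omega}M(P_\omega)$ of Proposition~\ref{p2.3} shows that $E_{min}$ is strictly decreasing and (since $E_{min}'(m)=-\tfrac12\omega(m)$ is strictly decreasing) strictly concave on $(\int q^2\,dx,\infty)$, with $E_{min}(0^+)=0$. Strict decrease disposes of any splitting in which one of $a,m-a$ does not exceed $\int q^2\,dx$, while strict concavity through the origin handles the case in which both pieces exceed $\int q^2\,dx$; together they give the strict inequality above.

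With these in hand I would apply Lions' concentration--compactness lemma to the densities $|u_n|^2$, which have total mass $m$. Vanishing is excluded because it forces $\|u_n\|_{L^4}\to0$, whence $\liminf E(u_n)\ge0>E_{min}(m)$; dichotomy is excluded by the strict subadditivity just established. Hence, after a translation $u_n(\cdot+y_n)$, the sequence is tight, so it converges strongly in $L^2$ and, by interpolation against the $H^1$ bound, in $L^4$ and $L^6$. Its weak $H^1$ limit $\Psi$ then satisfies $M(\Psi)=m$ and $E(\Psi)\le\liminf E(u_n)=E_{min}(m)$, so $\Psi$ is a minimizer and $\|\nabla u_n(\cdot+y_n)\|_{L^2}\to\|\nabla\Psi\|_{L^2}$, which promotes the convergence to strong $H^1$. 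Finally, the diamagnetic inequality gives $E(|\Psi|)\le E(\Psi)$ with equality only when $\Psi=e^{i\theta}|\Psi|$ for a constant $\theta$, so by the uniqueness Theorem~\ref{t4.4} one has $\Psi=e^{i\theta}P_\omega(\cdot-y)$; writing $\psi=P_\omega$ yields the stated strong convergence $u_n\to\psi(\cdot+y)e^{i\theta}$ in $H^1(\mathbb{R}^2)$.
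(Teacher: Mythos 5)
Your proof is correct, but it takes a genuinely different route from the paper's. The preliminary steps coincide ($E_{min}(m)>-\infty$, $H^1$-boundedness of minimizing sequences, $E_{min}(m)<0$) --- and in fact your negativity argument is cleaner: you test with the explicit rescaled soliton $cq$, $c^2=m/\int q^2dx$, using $\|q\|_{L^4}^4=2\|\nabla q\|_{L^2}^2$, whereas the paper's inequality (\ref{5.2h}) is asserted for an arbitrary $u$ of supercritical mass, which is only valid for (near-)optimizers of the sharp Gagliardo--Nirenberg inequality, i.e. essentially for your trial function. The real divergence is the compactness mechanism. The paper runs a Hmidi--Keraani profile decomposition \cite{HK2005}: each profile $U_n^j$ is rescaled to mass $m$, and the identity (\ref{5.2x}), together with $E(\widetilde{U}_n^j)\ge E_{min}(m)$ (which uses only the definition of the infimum) and $E_{min}(m)<0$, forces all the mass into a single profile. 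You instead run the classical Cazenave--Lions scheme \cite{CL1982}, excluding dichotomy by strict subadditivity of $E_{min}$, which you derive from strict monotonicity and concavity: $E_{min}(m)=E(P_{\omega(m)})$, Theorems \ref{t4.3} and \ref{t4.5} give the strictly increasing bijection $m\mapsto\omega(m)$, and Proposition \ref{p2.3} gives $E_{min}'(m)=-\tfrac{1}{2}\omega(m)$; all of these inputs live in Sections 2 and 4, so there is no circularity. The trade-off is this: the paper's bookkeeping never needs attainment of the infimum and so proves solvability from scratch, while your subadditivity argument needs the identification $E_{min}(m)=E(P_{\omega(m)})$ and hence imports existence of a minimizer from \cite{CS2021} (legitimate, since the paper itself quotes this as known, and your compactness conclusion then re-derives it); in exchange you obtain strict concavity and strict subadditivity of $E_{min}$ on $(\int q^2dx,\infty)$, facts of independent interest that the paper never establishes, and you avoid the paper's rather abrupt passage from (\ref{5.2ai}) to the exact identity (\ref{5.2aj}). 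Both proofs end identically, identifying the limit through the diamagnetic inequality and the uniqueness Theorem \ref{t4.4}.
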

\begin{proof}
At first we show the set $\{u\in H^{1}(\mathbb{R}^{2}),\; M(u)=m\}$ is nonempty. In fact, for any $u\in H^{1}(\mathbb{R}^{2})\backslash\{0\}$, let $M(u)=m_{0}$ and put
\begin{equation}\label{5.2a}
\lambda=\sqrt{\frac{m}{m_{0}}},\;\;\;v=\lambda u.
\end{equation}
Then we have $M(v)=m$. Thus $v\in \{u\in H^{1}(\mathbb{R}^{2}),\; M(u)=m\}$.
\par Next we prove $E_{min}(m)>-\infty$. Indeed, applying the interpolation inequality and Young inequality, we have that for any $\varepsilon>0$,
\begin{equation}\label{5.2b}
\int \lvert u\lvert^{4}dx\leq \varepsilon \int \lvert u\lvert^{6}dx+\varepsilon^{-1}\int \lvert u\lvert^{2}dx.
\end{equation}
Thus from (\ref{1.7}), it follows that
\begin{equation}\label{5.2c}
E(u)\geq \frac{1}{2}\int \lvert \nabla u\lvert^{2}dx+(\frac{1}{6}-\frac{\varepsilon}{4})\int \lvert u\lvert^{6}dx-\frac{1}{4\varepsilon}\int \lvert u\lvert^{2}dx.
\end{equation}
Take $0<\varepsilon<\frac{2}{3}$, then we have
\begin{equation}\label{5.2d}
E(u)\geq -\frac{1}{4\varepsilon}\int \lvert u\lvert^{2}dx=-\frac{1}{4\varepsilon}m>-\infty.
\end{equation}
This implies that $E_{min}(m)>-\infty$.\\
\par Now we prove $E_{min}(m)<0$. In fact, for $\lambda>0$, $u\in H^{1}(\mathbb{R}^{2})$, let
\begin{equation}\label{5.2e}
u_{\lambda}(x)=\lambda u(\lambda x),
\end{equation}
then we have
\begin{equation}\label{5.2f}
M(u_{\lambda}(x))=M(u)=m,
\end{equation}

\begin{equation}\label{5.2g}
E(u_{\lambda}(x))=\lambda^{2}\Big[\frac{1}{2}\int \lvert \nabla u\lvert^{2}dx-\frac{1}{4}\int \lvert u\lvert^{4}dx+\frac{1}{6}\lambda^{2}\int \lvert u\lvert^{6}dx \Big].
\end{equation}
Since $\|u\|_{L^{2}}^{2}>\|q\|_{L^{2}}^{2}$, then we have
\begin{equation}\label{5.2h}
\int \lvert \nabla u\lvert^{2}dx-\frac{1}{2}\int \lvert u\lvert^{4}dx<0.
\end{equation}
Therefore, one can select $\lambda>0$ sufficiently small such that
\begin{equation}\label{5.2i}
E(u_{\lambda})<0\;\;\;\;\text{for}\;\;\; \lambda\rightarrow 0+.
\end{equation}
This implies that
\begin{equation}\label{5.2ia}
E_{min}(m)<0.
\end{equation}
\par Let $\{u_{n}\}$ be a minimizing sequence of $E_{min}(m)$, then we have
\begin{equation}\label{5.2j}
M(u_{n})=m, \;\;\;\; E(u_{n})\rightarrow E_{min}(m),\;\;\;\text{as}\;\;\;n\rightarrow\infty.
\end{equation}
By the definition of limits, there exists $c>0$ such that
\begin{equation}\label{5.2k}
E(u_{n})\leq E_{min}(m)+c,\;\;\;\text{as}\;\;\;\; n\geq 1.
\end{equation}
By (\ref{5.2c}), we see that
\begin{equation}\label{5.2l}
\frac{1}{2}\int \lvert \nabla u_{n}\lvert^{2}dx\leq E(u_{n})+\frac{1}{4\varepsilon}\int \lvert u_{n}\lvert^{2}dx+1\leq E_{min}(m)+\frac{1}{4\varepsilon}m+1,
\end{equation}
which shows that $\{u_{n}\}$ is bounded in $H^{1}(\mathbb{R}^{2})$. Now we apply the profile decomposition theory (see \cite{HK2005}) to the minimizing sequence $\{u_n\}_{n=1}^{\infty}$. Then there exists a subsequence still denoted by $\{u_n\}_{n=1}^{\infty}$ such that
\begin{equation}\label{5.2m}
u_n(x)=\sum_{j=1}^{l}U_n^j(x)+u_n^l,
\end{equation}
where $U^j_n(x):=U^j(x-x^j_n)$ and $u^l_n:=u^l_n(x)$ satisfies
\begin{equation}\label{5.2n}
\lim_{l\rightarrow\infty}\limsup_{n\rightarrow\infty}\lvert\lvert u^l_n\rvert \rvert_{L^q(\mathbb{R}^2)}=0\quad with \quad  q\in [2,+\infty).
\end{equation}
Moreover, we have the following estimations as $n\rightarrow\infty$:
\begin{equation}\label{5.2o}
\lvert\lvert u_n\rvert \rvert^2_{L^2}=\sum^l_{j=1}\lvert\lvert U_n^j\rvert \rvert^2_{L^2}+\lvert\lvert u_n^l\rvert \rvert^2_{L^2}+o(1),
\end{equation}
\begin{equation}\label{5.2p}
\lvert\lvert \nabla u_n\rvert \rvert^2_{L^2}=\sum^l_{j=1}\lvert\lvert\nabla U_n^j\rvert \rvert^2_{L^2}+\lvert\lvert\nabla u_n^l\rvert \rvert^2_{L^2}+o(1),
\end{equation}
\begin{equation}\label{5.2q}
\lvert\lvert u_n\rvert \rvert^4_{L^4(\mathbb{R}^2)}=\sum^l_{j=1}\lvert\lvert U_n^j\rvert \rvert^4_{L^4(\mathbb{R}^2)}+\lvert\lvert u_n^l\rvert \rvert^4_{L^4(\mathbb{R}^2)}+o(1),
\end{equation}
\begin{equation}\label{5.2r}
\lvert\lvert u_n\rvert \rvert^6_{L^6(\mathbb{R}^2)}=\sum^l_{j=1}\lvert\lvert U_n^j\rvert \rvert^6_{L^6(\mathbb{R}^2)}+\lvert\lvert u_n^l\rvert \rvert^6_{L^6(\mathbb{R}^2)}+o(1).
\end{equation}
Thus we have that
\begin{equation}\label{5.2s}
E(u_n)=\sum^l_{j=1}E(U^j_n)+E(u^l_n)+o(1) \quad as \quad n\rightarrow \infty.
\end{equation}
For $j=1,...,l$, let
\begin{equation}\label{5.2t}
\lambda_j=\frac{\lvert\lvert u_n\rvert \rvert_{L^2}}{\lvert\lvert U^j_n\rvert \rvert_{L^2}},\quad
\lambda^l_n=\frac{\lvert\lvert u_n\rvert \rvert_{L^2}}{\lvert\lvert u^l_n\rvert \rvert_{L^2}}.
\end{equation}
By (\ref{5.2o}) we have that $\lambda_j\geq 1$ and $\lambda^l_n\geq1$. In addition, from the convergence of $\mathop{\sum}^l\limits_{j=1}\lvert\lvert U^j_n\rvert \rvert^2_{L^2}$, there exists a $j_0\geq 1 $ such that
\begin{equation}\label{5.2u}
\mathop{\inf}\limits_{j\geq1}\lambda_j=\lambda_{j_0}
=\frac{\lvert\lvert u_n\rvert \rvert_{L^2}}{\lvert\lvert U^{j_0}_n\rvert \rvert_{L^2}}.
\end{equation}
For $j=1,...,l$, put
\begin{equation}\label{5.2v}
\widetilde{U}^j_n=U^j_n(\lambda^{-1}_j\ x), \quad \widetilde{u}^l_n=u^l_n((\lambda^{l}_n)^{-1}\ x).
\end{equation}
Then we have that
\begin{equation}\label{5.2w}
\lvert\lvert \widetilde{U}^j_n\rvert \rvert^2_{L^2}=m=\lvert\lvert \widetilde{u}^l_n\rvert \rvert^2_{L^2},
\end{equation}

\begin{equation}\label{5.2x}
E(U^j_n)=\frac{E(\widetilde{U}^j_n)}{\lambda^2_j}+\frac{1-\lambda^{-2}_j}{2}\int\lvert \nabla U^j_n\rvert^2dx,
\end{equation}

\begin{equation}\label{5.2y}
E(u^l_n)=\frac{E(\widetilde{u}^l_n)}{(\lambda^l_n)^2}+\frac{1-(\lambda^l_n)^{-2}}{2}\int\lvert \nabla u^l_n\rvert^2dx.
\end{equation}
Thus we deduce that as $n\rightarrow\infty$ and $l\rightarrow\infty$,
\begin{equation}\label{5.2z}
E(u_n)\geq E_{min}(m)+\mathop{\inf}\limits_{j\geq1}(\frac{1-\lambda^{-2}_j}{2})\sum^l_{j=1}\int\lvert\nabla U^j_n\rvert^2dx+\frac{1-(\lambda^l_n)^{-2}}{2}\int\lvert\nabla u^l_n\rvert^2dx+o(1).
\end{equation}
Let $\beta=min\{\lambda_{j_0},\lambda^l_n\}$. Then from (\ref{5.2z}), we have that
\begin{equation}\label{5.2aa}
E(u_n)\geq E_{min}(m)+\frac{1-\beta^{-2}}{2}\|\nabla u_{n}\|_{L^{2}}^{2}+o(1).
\end{equation}
Since $\{u_{n}\}$ is bounded in $H^{1}(\mathbb{R}^{2})$, then it follows that
\begin{equation}\label{5.2ab}
E(u_{n}) \geq E_{min}(m)+\frac{1-\beta^{-2}}{2}c,
\end{equation}
where $c$ is a positive constant. For (\ref{5.2ab}), we get
\begin{equation}\label{5.2ac}
E_{min}(m) \geq E_{min}(m)+\frac{1-\beta^{-2}}{2}c.
\end{equation}
It follows that $\beta\leq1$. Thus we get that
\begin{equation}\label{5.2ad}
\lvert\lvert u_n\rvert \rvert_{L^2}\leq \lvert\lvert U^{j_0}_n\rvert \rvert_{L^2} \quad or \quad
\lvert\lvert u_n\rvert \rvert_{L^2}\leq \lvert\lvert u^{l}_n\rvert \rvert_{L^2}.
\end{equation}
If $\lvert\lvert u_n\rvert \rvert_{L^2}\leq\lvert\lvert u^{l}_n\rvert \rvert_{L^2}$, one deduces that
\begin{equation}\label{5.2ae}
\lvert\lvert u_n\rvert \rvert^4_{L^4(\mathbb{R}^2)}\rightarrow0 \quad as \quad n\rightarrow\infty,
\end{equation}
\begin{equation}\label{5.2af}
\lvert\lvert u_n\rvert \rvert^6_{L^6(\mathbb{R}^2)}\rightarrow0 \quad as \quad n\rightarrow\infty.
\end{equation}
Thus
\begin{equation}\label{5.2ag}
E_{min}(m)=\lim_{n\rightarrow\infty}E(u_{n})=\lim_{n\rightarrow\infty}\frac{1}{2}\|\nabla u_{n}\|_{L^{2}}^{2}\geq 0,
\end{equation}
which contradicts with $E_{min}(m)<0$. Therefore it is necessary that
\begin{equation}\label{5.2ah}
\lvert\lvert u_n\rvert \rvert_{L^2}\leq\lvert\lvert U^{j_0}_n\rvert \rvert_{L^2}.
\end{equation}
Thus we get that
\begin{equation}\label{5.2ai}
\lvert\lvert u_n\rvert \rvert^2_{L^2}=\lvert\lvert U^{j_0}_n\rvert \rvert^2_{L^2}, \quad
\lvert\lvert \nabla u_n\rvert \rvert^2_{L^2}=\lvert\lvert \nabla U^{j_0}_n\rvert \rvert^2_{L^2}.
\end{equation}
By (\ref{5.2m}), (\ref{5.2o}) and (\ref{5.2p}), it follows that
\begin{equation}\label{5.2aj}
u_n(x)=U^{j_0}_n(x)=U^{j_0}(x-x^{j_0}_n).
\end{equation}
Let
\begin{equation}\label{5.2ak}
u_n\rightharpoonup v \ in  \ H^1(\mathbb{R}^2).
\end{equation}
Then
\begin{equation}\label{5.2al}
u_n\rightarrow v \ a.e.\ in \ \mathbb{R}^2.
\end{equation}
Thus there exists some fixed $x^{j_0}_n$, denoted by $x^{j_0}$ such that
\begin{equation}\label{5.2am}
v=U^{j_0}(x-x^{j_0}):=U^{j_0}, \ a.e. \ in\ \mathbb{R}^2.
\end{equation}
Therefore we have that
\begin{equation}\label{5.2an}
u_n\rightarrow U^{j_0}\ \  in \ \ H^1(\mathbb{R}^2).
\end{equation}
It is clear that $I(U^{j_0})=0$. Thus $U^{j_0}$ is a minimizer of (\ref{4.0a}). Then for some 
\begin{equation}\label{5.2ao}
U^{j_0}=\psi(\cdot+y)e^{i\theta}.
\end{equation}
Thus one deduces that
\begin{equation}\label{5.2ap}
u_n\rightarrow \psi(\cdot+y)e^{i\theta}\ \  in \ \ H^1(\mathbb{R}^2).
\end{equation}
\end{proof}

Now we complete the proof of Theorem \ref{t1.1}.
\begin{proof}
In terms of Theorem \ref{t5.1} and Theorem \ref{t4.4}, one gets that Theorem \ref{t1.1} is true.
\end{proof}

\begin{theorem}\label{t5.2}
Let $\psi$ be the unique positive minimizer of (\ref{4.0a}) up to translations with $m>\int q^{2}dx$. Then the set of all minimizers of $E_{min}(m)$ is that
\begin{equation*}
S_m=\{e^{i\theta}\psi(\cdot+y), \; \theta\in\mathbb{R},\; y\in\mathbb{R}^2\}.
\end{equation*}
\end{theorem}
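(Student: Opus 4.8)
The plan is to prove the two set inclusions $S_m \subseteq \{\text{minimizers of } E_{min}(m)\}$ and $\{\text{minimizers of } E_{min}(m)\} \subseteq S_m$ separately, since together they give $S_m = \{\text{minimizers}\}$.

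For the inclusion $S_m \subseteq \{\text{minimizers}\}$, I would exploit the phase and translation symmetries of the functionals $M$ and $E$. By hypothesis $\psi$ is a positive minimizer, so $M(\psi)=m$ and $E(\psi)=E_{min}(m)$. For arbitrary $\theta\in\mathbb{R}$ and $y\in\mathbb{R}^2$, multiplication by $e^{i\theta}$ leaves $|\cdot|$ pointwise unchanged and satisfies $|\nabla(e^{i\theta}\psi)|=|\nabla\psi|$, while the translation $\psi(\cdot+y)$ preserves every integral over $\mathbb{R}^2$. Hence $M(e^{i\theta}\psi(\cdot+y))=m$ and $E(e^{i\theta}\psi(\cdot+y))=E(\psi)=E_{min}(m)$, so each element of $S_m$ is admissible and attains the infimum. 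This step is routine.

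For the reverse inclusion, which is the substantive direction, I would invoke the compactness already obtained in Theorem \ref{t5.1}. Let $u$ be any minimizer of $E_{min}(m)$. Then the constant sequence $u_n\equiv u$ satisfies $M(u_n)=m$ and $E(u_n)=E_{min}(m)$, so it is a minimizing sequence. Applying Theorem \ref{t5.1} yields a subsequence converging in $H^1(\mathbb{R}^2)$ to some $\psi(\cdot+y)e^{i\theta}$; but every subsequence of a constant sequence converges to $u$, and the $H^1$ limit is unique, so $u=e^{i\theta}\psi(\cdot+y)\in S_m$. This closes the argument.

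The hard part is not contained in this short deduction itself but is already absorbed into Theorem \ref{t5.1}, on which it rests entirely. If one instead wished to prove Theorem \ref{t5.2} directly, the delicate point would be \emph{phase rigidity}: given a minimizer $u$, one first uses the diamagnetic inequality $|\nabla|u||\le|\nabla u|$ (together with the fact that the potential terms depend only on $|u|$) to show that $|u|$ is also a minimizer and that the inequality is therefore saturated; next one uses elliptic regularity and the strong maximum principle applied to the Euler--Lagrange equation (\ref{1.2}) to conclude $|u|>0$; and finally one argues that saturation combined with positivity forces the phase of $u$ to be constant, so that $u=e^{i\theta}|u|$. Uniqueness of the positive minimizer up to translations (Theorem \ref{t4.4}, equivalently Theorem \ref{t1.1}) then identifies $|u|$ with a translate of $\psi$. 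In this alternative route the constant-phase deduction from the saturated diamagnetic inequality is the technical heart.
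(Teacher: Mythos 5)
Your primary route for the hard inclusion --- run the constant sequence $u_n\equiv u$ through Theorem \ref{t5.1} and use uniqueness of $H^1$ limits --- is formally valid as a deduction, but it is genuinely different from the paper's proof, and inside this paper it would be circular. The paper proves Theorem \ref{t5.2} directly: given a minimizer $u$, the diamagnetic inequality (\ref{5.3b}) gives $E(|u|)\le E(u)$, so $|u|$ is also a minimizer of (\ref{1.11}); Theorem \ref{t4.4} then identifies $|u|=\psi$ up to translations, whence $u\in S_m$. This direct argument is precisely what is needed to justify step (\ref{5.2ao}) inside the paper's own proof of Theorem \ref{t5.1}, where the profile limit $U^{j_0}$, known only to be a minimizer, is declared to equal $\psi(\cdot+y)e^{i\theta}$ --- that declaration \emph{is} the classification of minimizers asserted by Theorem \ref{t5.2}. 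So the logical dependency runs from Theorem \ref{t5.2}'s content into Theorem \ref{t5.1}, not the other way; citing Theorem \ref{t5.1} to prove Theorem \ref{t5.2} uses a statement whose proof already presupposes the conclusion.

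Your ``alternative route'' is essentially the paper's actual argument, and your version is in fact more careful than the printed one. The paper opens by asserting (\ref{5.3a}), $u=|u|e^{i\theta}$ with constant $\theta$, before any argument, which is exactly the nontrivial phase-rigidity step; as you correctly observe, it must be \emph{derived}: saturation of the diamagnetic inequality (forced because $|u|$ is also a minimizer), strict positivity of $|u|$ from the Euler--Lagrange equation (\ref{1.2}) and the strong maximum principle, and then connectedness of $\mathbb{R}^2$ forcing the phase to be constant a.e. You should therefore promote that sketch to your main proof: diamagnetic inequality, positivity, phase rigidity, then Theorem \ref{t4.4} (equivalently Theorem \ref{t1.1}) to identify $|u|$ with a translate of $\psi$, plus the routine invariance argument for the easy inclusion. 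That yields a self-contained proof matching the paper's intent while repairing the ordering gap at (\ref{5.3a}).
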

\begin{proof}
Suppose that $u$ is a minimizer of the variational problem (\ref{1.11}). One has that
\begin{equation}\label{5.3a}
u=\rvert u\rvert e^{i\theta}\;\;\;\text{for some}\;\;\;\theta\in \mathbb{R}.
\end{equation}
Since for $u\in H^{1}(\mathbb{R}^{2})$,
\begin{equation}\label{5.3b}
\int\rvert\nabla u\rvert^{2}dx\geq \int\rvert\nabla \rvert u\rvert\rvert^{2}dx,
\end{equation}
it follows that
\begin{equation}\label{5.3c}
E(u)\geq E(\rvert u\rvert).
\end{equation}
It yields that $\rvert u\rvert$ is also a
minimizer of the variational problem (\ref{1.11}). By Theorem \ref{t4.4}, one implies that
\begin{equation}\label{5.3d}
\rvert u\rvert=\psi \;\;\;\;\text{up to translations}.
\end{equation}
It follows that
\begin{equation}\label{5.3e}
u\in S_{m}=\{ e^{i\theta}\psi(\cdot+y),\ \ \theta\in \mathbb{R},\;\;\;y\in\mathbb{R}^2\}.
\end{equation}
\end{proof}

Now we complete the proof of Theorem \ref{t1.2}.
\begin{proof}
By Theorem \ref{t1.1}, for $m>\int q^{2}dx$, the variational problem $E_{min}(m)$ possesses a unique positive minimizer $\psi$ up to a translation. Then $\psi$ satisfies the Euler-Lagrange equation (\ref{1.2}) with the Lagrange multipliers $\omega\in(0,\frac{3}{16})$. By Proposition \ref{p5.1} (refer to \cite{C2003}), for arbitrary $\varphi_{0}\in H^{1}$, (\ref{1.1}) with $\varphi(0,x)=\varphi_{0}(x)$ possesses a unique global solution $\varphi(t,x)\in \mathcal{C}(\mathbb{R}, H^{1}(\mathbb{R}^{2}))$. In addition, $\varphi(t,x)$ satisfies the mass conservation $M(\varphi(t,\cdot))=M(\varphi_{0}(\cdot))$ and the energy conservation $E(\varphi(t,\cdot))=E(\varphi_{0}(\cdot))$ for all $t\in\mathbb{R}$. Now arguing by contradiction.

If the conclusion of Theorem \ref{t1.2} does not hold, then there exist $\varepsilon>0$, a sequence $(\varphi_{0}^{n})_{n\in N^{+}}$ such that
\begin{equation}\label{5.4a}
\mathop{\inf}\limits_{\{\theta\in\mathbb{R},y\in\mathbb{R}^2\}}\lvert\lvert \varphi_0^n-e^{i\theta}\psi(\cdot+y)\rvert \rvert_{H^1}<\frac{1}{n},
\end{equation}
and a sequence $(t_n)_{n\in \mathbb{N}^+}$ such that
\begin{equation}\label{5.4b}
\mathop{\inf}\limits_{\{\theta\in\mathbb{R},y\in\mathbb{R}^2\}}\lvert\lvert \varphi_n(t_n,\cdot)-e^{i\theta}\psi(\cdot+y)\rvert \rvert_{H^1}\geq\varepsilon,
\end{equation}
where $\varphi_n$ denotes the global solution of (\ref{1.1}) with $\varphi(0,x)=\varphi_0^n$. From (\ref{5.4a}) it yields that for some $\theta\in\mathbb{R}$, $y\in\mathbb{R}^2$,
\begin{equation}\label{5.4c}
\varphi^n_0\rightarrow e^{i\theta}\psi(\cdot+y), \quad in \quad H^1(\mathbb{R}^2), \quad n\rightarrow\infty.
\end{equation}
Thus we have that
\begin{equation}\label{5.4d}
\int\lvert \varphi^n_0\rvert^2dx \rightarrow \int  \psi^2dx,\;\; \quad E(\varphi^n_0)\rightarrow E(\psi),\;\; n\rightarrow\infty.
\end{equation}
Since for $n\in N^{+}$ ,
\begin{equation}\label{5.4e}
\int\lvert \varphi_{n}(t_{n},\cdot)\lvert^{2}dx=\int \lvert \varphi_{0}^{n}\lvert^{2}dx,\;\;\; E(\varphi_{n}(t_{n},\cdot))=E(\varphi_{0}^{n}),
\end{equation}
from (\ref{5.4d}) we have that
\begin{equation}\label{5.4f}
\int\lvert \varphi_{n}(t_{n},\cdot)\lvert^{2}dx\rightarrow \int \psi^{2}dx,\;\;\; E(\varphi_{n}(t_{n},\cdot))\rightarrow E(\psi),\;\; n\rightarrow\infty.
\end{equation}
This yields that from (\ref{5.4f})
\begin{equation}\label{5.4g}
\lim_{n\rightarrow\infty}\lvert\lvert \varphi_{n}(t_{n},\cdot)-e^{i\theta}\psi(\cdot+y)\rvert \rvert_{H^1}=0.
\end{equation}
This is contradictory with (\ref{5.4b}).

Therefore Theorem \ref{t1.2} is true.
\end{proof}

\section{Classification of normalized solutions}\label{sec8}

\begin{theorem}\label{t6.1}
Let $u(x)$ be a solution of (\ref{1.2}) with $\omega\in (0,\frac{3}{16})$ and $q(x)$ be the unique positive solution of (\ref{1.10}). Then one yields that
\begin{equation*}
M(u)> \int q^{2}dx\;\;\;\text{for\;\;all}\;\;\; \omega\in (0,\frac{3}{16}).
\end{equation*}
\end{theorem}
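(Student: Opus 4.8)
The plan is to combine two ingredients: the Pohozaev-type identity satisfied by every $H^1$-solution of (\ref{1.2}), and the sharp two-dimensional Gagliardo--Nirenberg inequality whose optimizer is the cubic ground state $q$. First I would record that an arbitrary solution $u$ of (\ref{1.2}) obeys the identity displayed in Proposition \ref{p2.2}(III), namely
\begin{equation*}
\int |\nabla u|^2\,dx = \frac12\int|u|^4\,dx - \frac23\int|u|^6\,dx.
\end{equation*}
Although (III) is stated there only for the positive ground state $P_\omega$, the same identity follows for any solution by testing (\ref{1.2}) against $u$ (the Nehari relation) and against $x\cdot\nabla u$ (the Pohozaev relation) and then eliminating the $\omega\int|u|^2$ term; only $u\not\equiv 0$ enters. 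Since $u$ is nontrivial, $\int|u|^6\,dx>0$, so the defocusing quintic term forces the strict inequality
\begin{equation*}
\int|\nabla u|^2\,dx < \frac12\int|u|^4\,dx,\qquad\text{i.e.}\qquad \int|u|^4\,dx>2\int|\nabla u|^2\,dx.
\end{equation*}

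Next I would invoke the sharp Gagliardo--Nirenberg inequality on $\mathbb{R}^2$,
\begin{equation*}
\int|u|^4\,dx \le \frac{2}{\int q^2\,dx}\,\|\nabla u\|_{L^2}^2\,\|u\|_{L^2}^2,
\end{equation*}
whose equality case is attained precisely by $q$. The constant $2/\!\int q^2$ is obtained from Weinstein's ground-state identities $\|\nabla q\|_{L^2}^2=\int q^2$ and $\int q^4 = 2\int q^2$, which themselves come from the Nehari and Pohozaev relations for (\ref{1.10}). Rearranged, the inequality reads $M(u)=\|u\|_{L^2}^2 \ge \tfrac12\big(\int q^2\big)\,\dfrac{\int|u|^4\,dx}{\|\nabla u\|_{L^2}^2}$.

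Finally I would just combine the two displays: feeding $\int|u|^4 > 2\int|\nabla u|^2$ into the rearranged Gagliardo--Nirenberg bound yields
\begin{equation*}
M(u) \ge \frac12\Big(\int q^2\,dx\Big)\frac{\int|u|^4\,dx}{\int|\nabla u|^2\,dx} > \frac12\Big(\int q^2\,dx\Big)\cdot 2 = \int q^2\,dx,
\end{equation*}
which is the claim; since $\omega$ never enters the estimate, the strict inequality holds uniformly for all $\omega\in(0,\tfrac{3}{16})$. The argument is short, so the delicate points are bookkeeping rather than a genuine obstacle: I must ensure the Pohozaev identity is available for possibly sign-changing or complex solutions (and not merely for the positive $P_\omega$), and I must pin down the exact constant $2/\!\int q^2$ through the ground-state identities for $q$. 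Conceptually, the mechanism is that the defocusing quintic term strictly depresses $\|\nabla u\|_{L^2}^2$ below $\tfrac12\|u\|_{L^4}^4$, and it is exactly this strict gap that lifts the mass above the critical cubic value $\int q^2$.
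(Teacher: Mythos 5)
Your proposal is correct and follows essentially the same route as the paper: the Pohozaev-type identity $\int|\nabla u|^2+\frac{2}{3}\int|u|^6-\frac{1}{2}\int|u|^4\,dx=0$ combined with the sharp Gagliardo--Nirenberg inequality with constant $2/\!\int q^2\,dx$, where the strict positivity of the quintic term forces the strict mass inequality. The only difference is organizational (you first extract $\int|u|^4>2\int|\nabla u|^2$ and then apply Gagliardo--Nirenberg, while the paper substitutes the inequality directly into the identity), plus your welcome extra care in justifying the identity for general, not necessarily positive, solutions.
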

\begin{proof}
Since $u(x)$ is the unique positive solution of $(\ref{1.2})$, then one has that
\begin{equation}\label{2.1}
\int\lvert \nabla u \rvert^2 + \frac{2}{3}\lvert u\lvert ^6-\frac{1}{2}\lvert u\lvert ^4dx=0.
\end{equation}
By the sharp Ggaliardo-Nirenberg inequality
\begin{equation}\label{2.2}
\int \lvert u\rvert^4dx\leq 2\frac{\lvert\lvert u\lvert\lvert_{L^{2}(\mathbb{R}^{2})}^{2}}{\lvert\lvert q\lvert\lvert_{L^{2}(\mathbb{R}^{2})}^{2}}\int \lvert \nabla u\rvert^2dx,\;\;\;\;\forall u\in H^{1}(\mathbb{R}^{2}),
\end{equation}
and (\ref{2.1}), one has that
\begin{equation}\label{2.3}
\Big(1-\frac{\lvert\lvert u\lvert\lvert_{L^{2}(\mathbb{R}^{2})}^{2}}{\lvert\lvert q\lvert\lvert_{L^{2}(\mathbb{R}^{2})}^{2}}\Big)\int \lvert \nabla u \rvert^2+\frac{2}{3}\int u^6dx\leq 0.
\end{equation}
This implies that
\begin{equation}\label{2.4}
\lvert\lvert u\lvert\lvert_{L^{2}(\mathbb{R}^{2})}^{2}>\lvert\lvert q\lvert\lvert_{L^{2}(\mathbb{R}^{2})}^{2}.
\end{equation}
\end{proof}

\begin{theorem}\label{t6.2}
Let $q(x)$ be the unique positive solution of (\ref{1.10}). Then when $0<m\leq\int q^{2}dx$, (\ref{1.2}) has no any normalized solutions with the prescribed mass $\int \lvert u\lvert^{2}dx=m$. When $m>\int q^{2}dx$, (\ref{1.2}) has a unique positive normalized solution with the prescribed mass $\int \lvert u\lvert^{2}dx=m$.
\end{theorem}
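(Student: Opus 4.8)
The plan is to regard a normalized solution as a non-trivial $H^{1}$ solution of (\ref{1.2}) for some Lagrange multiplier $\omega$ whose mass equals the prescribed value $m$, and to split the analysis at the threshold $\int q^{2}dx$. The starting point I would record is that, by the Berestycki--Lions characterization recalled in (\ref{1.3}), equation (\ref{1.2}) has no non-trivial $H^{1}$ solution whatsoever unless $\omega\in(0,\frac{3}{16})$; hence every normalized solution, of whatever sign, must correspond to a frequency $\omega\in(0,\frac{3}{16})$.

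For the range $0<m\le\int q^{2}dx$ I would establish non-existence by upgrading Theorem \ref{t6.1} to arbitrary non-trivial solutions. Its proof relies only on the Pohozaev identity (\ref{2.1}) and the sharp Gagliardo--Nirenberg inequality (\ref{2.2}), both of which hold for any $H^{1}$ solution without any use of positivity, so the conclusion $M(u)>\int q^{2}dx$ is in fact valid for every non-trivial solution with $\omega\in(0,\frac{3}{16})$. Combined with the preceding observation, this rules out any normalized solution of mass $m\le\int q^{2}dx$, which is the first assertion.

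For $m>\int q^{2}dx$, existence is immediate: Theorem \ref{t4.3} supplies a unique $\omega\in(0,\frac{3}{16})$ with $M(P_{\omega})=m$, so the positive ground state $P_{\omega}$ is a positive normalized solution of mass $m$. For uniqueness I would let $u$ be any positive normalized solution of mass $m$; by the first observation it solves (\ref{1.2}) for some $\omega\in(0,\frac{3}{16})$, and Proposition \ref{p2.2}(I) forces $u=P_{\omega}$ up to translation. Since $M(P_{\omega})=m$ while $\omega\mapsto M(P_{\omega})$ is strictly increasing, hence injective, by Theorem \ref{t4.5}, the frequency $\omega$ is pinned down to the one produced by Theorem \ref{t4.3}, and therefore $u$ coincides with that $P_{\omega}$ up to translation.

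The step demanding the most care --- and the one I expect to be the genuine obstacle --- is the verification that Theorem \ref{t6.1} extends from the positive ground state to arbitrary normalized solutions, since it is precisely this general class that must be excluded in the non-existence regime. Once it is checked that neither the Pohozaev identity (\ref{2.1}) nor the sharp Gagliardo--Nirenberg estimate (\ref{2.2}) uses the sign of $u$, the mass bound transfers directly, and all remaining steps are routine applications of Proposition \ref{p2.2}, Theorem \ref{t4.3} and Theorem \ref{t4.5}.
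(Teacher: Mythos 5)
Your proof is correct, and its non-existence half coincides with the paper's: both reduce the case $0<m\leq\int q^{2}dx$ to Theorem \ref{t6.1}, and your observation that the Pohozaev identity (\ref{2.1}) and the sharp Gagliardo--Nirenberg inequality (\ref{2.2}) never use the sign of $u$ is exactly what justifies applying that theorem to arbitrary non-trivial (possibly sign-changing or complex) solutions --- a point worth making explicit, since the paper's proof of Theorem \ref{t6.1} nominally assumes positivity. Where you diverge is in the regime $m>\int q^{2}dx$: you obtain existence directly from Theorem \ref{t4.3} (the unique $\omega$ with $M(P_{\omega})=m$ hands you $P_{\omega}$ as the normalized solution) and uniqueness from Proposition \ref{p2.2} together with the injectivity of $\omega\mapsto M(P_{\omega})$ (Theorem \ref{t4.5}), whereas the paper routes existence through the constrained problem $E_{min}(m)$: the positive minimizer $\psi$ of Theorem \ref{t5.1}, its uniqueness (Theorems \ref{t1.1} and \ref{t4.4}), and the multiplier identification via Theorem \ref{t4.3}, before concluding $\psi=P_{\omega}$ up to translation. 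Your route is leaner and, for the uniqueness assertion, arguably tighter: the paper's closing appeal to Theorem \ref{t4.4} controls only minimizers of $E_{min}(m)$, and excluding other positive normalized solutions requires precisely the argument you give (Proposition \ref{p2.2} plus injectivity of the mass map). What the paper's variational detour buys is the additional identification of the unique positive normalized solution with the energy minimizer, i.e., it shows the normalized solution is a normalized ground state, which is the point of Section 6's classification; your proof establishes the theorem as stated but does not yield that extra identification.
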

\begin{proof}
When $0<m\leq\int q^{2}dx$, from Theorem (\ref{t6.1}) it yields that (\ref{1.2}) has no any normalized solutions with the prescribed mass $\int \lvert u\lvert^{2}dx=m$. On the other hand, in terms of Proposition \ref{p2.2}, for $\omega\in (0,\frac{3}{16})$, (\ref{1.2}) possesses a unique positive solution $P_{\omega}(x)$.  For this $\omega$, according to Theorem \ref{t6.1} there exists a $m$ such that
\begin{equation}\label{6.1a}
M(P_{\omega})=m>\int q^{2}dx.
\end{equation}
By Theorem \ref{t5.1}, the variational problem $E_{min}(m)$ possesses a positive minimizer $\psi(x)$. By Theorem \ref{t1.1}, this positive minimizer $\psi(x)$ is unique up to translations. Then there exists a unique Lagrange multiplier $\omega'$ corresponding to $\psi$  such that $\psi$ satisfies (\ref{1.2}). Note that
\begin{equation}\label{6.1b}
M(\psi)=m=M(P_{\omega}).
\end{equation}
By Theorem \ref{t4.3}, one has that $\omega=\omega'$. From Proposition \ref{p2.2},
\begin{equation*}
P_{\omega}(x)=\psi(x)\;\;\;\;\text{up to a translation}.
\end{equation*}
Therefore in terms of Theorem \ref{t4.4}, one deduce that when $m>\int q^{2}dx$, (\ref{1.2}) has a unique positive normalized solutions with the prescribed mass $\int \rvert u\rvert^{2}dx=m$.
\end{proof}

\textbf{Acknowledgment.}

This research is supported by the National Natural Science Foundation of China 12271080, 12571318 and Sichuan Technology Program 25LHJJ0156.

\textbf{Data Availability Statement}

Data sharing not applicable to this article as no datasets were generated or analysed during the current study.

\end{document}